\documentclass[11pt,a4paper]{article}
\usepackage{amsmath}
\usepackage{amsthm}
\usepackage{amsfonts}
\usepackage{multirow}
\usepackage[table,xcdraw]{xcolor}
\usepackage{subcaption}
\usepackage{float}
\usepackage{algorithm}
\usepackage{algpseudocode}
\usepackage{hyperref}
\usepackage{tabularx}
\usepackage{amssymb}
\usepackage{graphicx}
\usepackage{tcolorbox}
\usepackage{booktabs}
\usepackage[table,xcdraw]{xcolor}
\usepackage{multirow}
\usepackage{rotating,tabularx}

\usepackage[left=2.5cm,right=2.5cm,top=2.5cm,bottom=2.5cm]{geometry}
\newcolumntype{L}[1]{>{\raggedright\let\newline\\\arraybackslash\hspace{0pt}}m{#1}}
\newcolumntype{C}[1]{>{\centering\let\newline\\\arraybackslash\hspace{0pt}}m{#1}}
\newcolumntype{R}[1]{>{\raggedleft\let\newline\\\arraybackslash\hspace{0pt}}m{#1}}

\newtheorem{Theorem}{Theorem}[section]
\newtheorem{Proposition}[Theorem]{Proposition}
\newtheorem{Remark}[Theorem]{Remark}
\newtheorem{Lemma}[Theorem]{Lemma}
\newtheorem{Corollary}[Theorem]{Corollary}

\newtheorem{Example}[Theorem]{Example}

\usepackage{hyperref}
\hypersetup{colorlinks=true,urlcolor=blue}
\expandafter\let\expandafter\oldproof\csname\string\proof\endcsname
\let\oldendproof\endproof
\renewenvironment{proof}[1][\proofname]{
\oldproof[\ttfamily\scshape \bf #1.]
}{\oldendproof}

\def\dom{{\rm dom}\,}

\def\epi{{\rm epi\,}}

\def\ox{\overline{x}}
\def\oy{\overline{y}}
\def\oz{\overline{z}}

\def\Hat{\widehat}

\def\Bar{\overline}
\def\ra{\rangle}
\def\la{\langle}

\def\epsilon{\varepsilon}
\def\ox{\bar{x}}
\def\oy{\bar{y}}
\def\oz{\bar{z}}

\def\ov{\bar{v}}

\def\gph{\mbox{\rm gph}\,}
\def\epi{\mbox{\rm epi}\,}

\def\dom{\mbox{\rm dom}\,}

\def\dn{\downarrow}
\def\O{\Omega}
\def\ph{\varphi}

\def\st{\stackrel}
\def\oR{\Bar{\R}}
\def\lm{\lambda}

\def\al{\alpha}

\def \N{{\rm I\!N}}
\def \R{{\rm I\!R}}

\def\Limsup{\mathop{{\rm Lim}\,{\rm sup}}}

\def\Limsup{\mathop{{\rm Lim}\,{\rm sup}}}

\numberwithin{equation}{section}

\title{\bf  New Characterizations of Nonsmooth Convex Functions via Generalized Derivatives}
\author{Vo Thanh Phat\footnote{Department of Mathematics and Statistics,  University of North Dakota, Grand Forks, North Dakota, USA. E-mail: thanh.vo.1@und.edu.}}
\begin{document}  
\maketitle

\noindent
{\small{\bf Abstract}.  This paper studies the convexity properties of nonsmooth extended-real-valued weakly convex functions, a class of functions that is central to modern optimization and its applications. We establish new characterizations of convexity using second-order generalized derivative tools, including subgradient graphical derivatives, second subderivatives, and second-order subdifferentials. These tools allow us to derive necessary and sufficient conditions for convexity in the nonsmooth framework.  \\[1ex]
{\bf Key words}. Convexity, weakly convex functions,  second-order subdifferentials,  graphical derivatives, second subderivatives  \\[1ex]
{\bf Mathematics Subject Classification (2010).} 49J52, 49J53, 90C30, 26B25 }\vspace*{-0.1in}

\section{Introduction}
Convexity plays a fundamental and important role in optimization theory and numerical methods. As is well known, the  {second-order} derivative test in classical calculus provides a powerful tool for identifying convexity.  {More specifically, a twice continuously differentiable function $\varphi:\Omega\to\R$, where $\Omega\subset\R^n$ is an open convex set, is convex if and only if its Hessian matrix $\nabla^2\varphi(x)$ is positive semidefinite for all $x\in\Omega$.} However, in many practical models, the first or second-order derivatives of a function may not exist, which makes it impossible to apply this criterion to characterize convexity, especially for nonsmooth extended-real-valued functions.

\medskip To address the nonsmooth case, many characterizations utilizing second-order generalized derivatives for convex functions have been investigated. In \cite{ChieuChuongYaoYen}, the authors employ the second-order subdifferential introduced by Mordukhovich in \cite{m92} to establish a necessary condition for nonsmooth convex functions, as well as sufficient conditions for convexity in several special cases, including $\mathcal{C}^{1,1}$ functions, $\mathcal{C}^{1}$ univariate functions, piecewise linear functions, and certain classes of piecewise $\mathcal{C}^2$ functions. The development of sufficient conditions for convexity of piecewise $\mathcal{C}^2$ functions was then completed by Chieu and Yao in \cite{chieuyao11}. In \cite{chieuhuy}, Chieu and Huy provided a characterization of $\mathcal{C}^1$-smooth functions via second-order subdifferentials and showed that this result cannot be extended to general locally Lipschitz functions. The characterization of convexity for lower-$\mathcal{C}^2$ functions was later investigated in \cite{clmn16}. More recently, Nadi and Zafarani proposed another sufficient condition for the convexity of lower-$\mathcal{C}^1$ functions in \cite{nz23}.

\medskip To the best of our knowledge, although many conditions for characterizing the convexity of nonsmooth functions have been discussed above, the characterization of convexity for nonsmooth  {extended-real-valued} functions via  {second-order} subdifferentials has not yet been investigated. Moreover, all previously mentioned conditions require the functions to be locally Lipschitz continuous and real-valued. In addition, while  {second-order} subdifferentials have proven to be powerful tools in variational analysis, both in theory and in numerical algorithms as shown in \cite{Mordukhovich06,Mor18,Mor24}, other generalized derivatives that are also important in theoretical and numerical optimization, including subgradient graphical derivatives and second subderivatives as presented in \cite{chnt,ChieuNghia,gtdquad,mms,BorisEbrahim}, have not yet been explored for characterizing the convexity of nonsmooth functions.

\medskip In this paper, we investigate new characterizations of convexity for extended-real-valued weakly convex functions, a class that plays a fundamental role in optimization and related areas. Our analysis is conducted in the nonsmooth setting and relies on three central second-order generalized derivatives discussed above. Weak convexity has been extensively studied in variational analysis and optimization due to its rich structure and broad applicability; see, for example, \cite{dm05,davisdrus22,Rockafellar98,vial83}. In particular, this class supports the convergence analysis of a variety of numerical methods, including proximal-type and first-order algorithms for nonsmooth optimization (cf. \cite{davisdrus22,kmptjogo25}). Weak convexity also plays an important role in the theory of differential equations, where it typically appears under the name of semiconvexity. In particular, semiconvex functions arise naturally in the analysis of Hamilton-Jacobi equations, viscosity solutions, and optimal control problems, where they provide essential regularity properties such as almost-everywhere second-order differentiability and stability under infimal convolutions; see, for example, \cite{cs04,cil92}. These properties are crucial for the well-posedness of solutions, the derivation of comparison principles, and the qualitative analysis of nonsmooth dynamics. This perspective further underscores the broad relevance of weakly convex functions across optimization, variational analysis, and partial differential equations.

\medskip The rest of the paper is organized as follows. Section \ref{sec:prelim} presents the necessary definitions and results from variational analysis.  {In Section \ref{sec:graphical}, we establish necessary and sufficient conditions for nonsmooth convex functions using generalized second-order derivative constructions. Section \ref{sec:ex} presents illustrative examples and further discussions of our results, particularly for functions commonly encountered in practical models.} Finally, Section \ref{sec:conclusion} offers concluding remarks and outlines possible directions for future research.
 
\section{Preliminaries from Variational Analysis}\label{sec:prelim}
This section recalls some notions and notation frequently used in this paper. All our considerations are given in the space $\R^n$ with the \textit{Euclidean norm} $\|\cdot\|$ and \textit{scalar/inner product} $\langle \cdot,\cdot\rangle$ defined by
$$
\|x\|:= \left(\sum_{i=1}^n x_i^2 \right)^{\frac{1}{2}} \; \text{and } \langle x,y\rangle =\sum_{i=1}^n x_i y_i \quad \text{where } x:=(x_1,\ldots, x_n), \; y:=(y_1,\ldots,y_n).
$$
Throughout the paper, $\overline{\R} := \R \cup \{\infty\}$ denotes the extended real line, and $\N := \{1,2,\ldots\}$ denotes the set of natural numbers.

\medskip 
We provide the necessary background on variational analysis and generalized differentiation, drawing on the foundational works of Mordukhovich \cite{Mordukhovich06,Mor18,Mor24} and Rockafellar and Wets \cite{Rockafellar98}. 
Given a set $\Omega\subset\R^s$ with $\oz\in\O$, the (Bouligand-Severi) {\em tangent/contingent cone} to $\O$ at $\oz$ is
\begin{equation}\label{tan}
T_\O(\oz):=\big\{w\in\R^s\;\big|\;\exists\,t_k\dn 0,\;w_k\to
w\;\mbox{ as }\;k\to\infty\;\mbox{ with }\;\oz+t_k w_k\in\O\big\}.
\end{equation} 
The (Fr\'echet) {\em regular normal cone} to $\Omega$
at $\bar{z}\in\Omega$ is defined by \begin{equation}\label{rnc}
\widehat{N}_\Omega(\bar{z}):=\Big\{v\in\R^s\;\Big|\;\limsup_{z\overset{\Omega}{\rightarrow}\bar{z}}\frac{\langle
v, z-\bar{z}\rangle}{\|z-\bar{z}\|}\le 0\Big\}, \end{equation} 
where
the symbol $z\overset{\Omega}{\rightarrow}\bar{z}$ indicates that $z\to\bar{z}$ with $z\in\Omega$. It can be equivalently described via a duality correspondence with \eqref{tan} by
\begin{equation}\label{dua} \Hat
N_\O(\oz)=T^*_\O(\oz):=\big\{v\in\R^s\;\big|\;\la v,w\ra\le
0\;\mbox{ for all }\;w\in T_\O(\oz)\big\}. \end{equation} The
(Mordukhovich) {\em limiting normal cones} to $\Omega$ at
$\bar{z}\in\Omega$ is defined by \begin{equation}\label{lnc}
N_\Omega(\bar{z}):=\big\{v\in\R^s\;\big|\;\exists\,z_k\st{\O}{\to}\bar{z},\;v_k\to
v\;\text{ as }\;k\to\infty\;\text{ with
}\;v_k\in\widehat{N}_\Omega(z_k)\big\}. \end{equation} 
It is important to observe that the regular normal cone defined in \eqref{rnc} is always convex, whereas the limiting normal cone in \eqref{lnc} can be nonconvex, as illustrated by the graph of $|\cdot|$ at $\bar{z}=(0,0)$. Consequently, it cannot be derived through a duality relation of the form \eqref{dua} from any tangential approximation of $\Omega$ at $\bar{z}$. Despite this, the limiting normal cone \eqref{lnc}, along with the associated  subdifferential constructions for  functions, enjoy full calculus rules that are grounded in the variational and extremal principles of variational analysis.

\medskip Let $F$ be a set-valued mapping  between Euclidean spaces $\R^n$ and $\R^m$. As usual, the effective domain and the graph of $F$ are given, respectively, by
$$
\text{dom}F:=\{x\in \R^n|F(x)\ne\emptyset\}\quad\text{and}\quad\text{gph}F=\{(x,y)\in\R^n\times\R^m|y\in F(x)\}.
$$
The  (Painlev\'e-Kuratowski) \textit{outer limit} of $F$ as $x\rightarrow\bar{x}$ is given by
\begin{equation}\label{outer}
\underset{x\rightarrow\bar{x}}{\Limsup}\;F(x):
=\big\{ {y\in \R ^m}\;\big|\,\exists \mbox{ sequences } x_k\to \bar x,\ y_k\rightarrow y\;\mbox {with }\;y_k\in F(x_k),\;k\in\N\big
\}.
\end{equation}
The {\em graphical derivative} of $F$ at
$(\ox,\oy)\in\gph F$ is defined via \eqref{tan} by
\begin{equation}\label{gra-der}
DF(\ox,\oy)(u):=\big\{v\in\R^m\;\big|\;(u,v)\in T_{{\rm
		gph}\,F}(\ox,\oy)\big\},\quad u\in\R^n. 
\end{equation} 
This concept was introduced in the early 1980s by Jean-Pierre Aubin, who called it the contingent derivative. Here we follow the references \cite{Donchev09,Rockafellar98}
in using the terminology ``the graphical derivative". If $(\ox,\oy)\notin \gph F$, one puts $DF(\ox,\oy)(w) = \emptyset$ for all $w$ by convention.  In terms of the outer limit given in \eqref{outer} and the difference quotient mapping 
$$
\Delta_t F(x,y)(w)= \frac{1}{t}[F(x+tw)-y] \quad \text{for } y \in F(x), \; t>0,
$$
we can represent the graphical derivative in \eqref{gra-der} as follows:
\begin{equation}\label{gra-derII}
DF(\ox,\oy)(u)= \Limsup_{t \downarrow 0, w \to u} \Delta_t F(\ox,\oy)(w), \quad u \in \R^n. 
\end{equation}
By the definition of graphical derivative, we have the following relationship
\begin{equation}\label{relainverse}
v \in DF(\ox,\oy)(u) \iff u \in DF^{-1}(\oy,\ox)(v) \quad \text{for all } u \in \R^n, v\in\R^m. 
\end{equation} 
In the case where $F(\bar{x})$ is the singleton $\{\bar{y}\}$, we omit $\oy$ in
the notation of \eqref{gra-der}.   Note that if the mapping
$F\colon\R^n\to\R^m$ is a single-valued differentiable mapping  at $\ox$, then $ {DF(\bar{x})(u)=\{\nabla F(\bar{x})u\}}$ for all $u\in\R^n$, where $\nabla F(\ox)$ is the Jacobian matrix of $F$ at $\ox$. 
Using the normal cones \eqref{rnc} and \eqref{lnc} to the graph of $F$, we define the {\em regular coderivative} and the {\em limiting coderivative} of $F$ at $(\ox,\oy)\in\gph F$, respectively, by
\begin{equation}\label{reg-cod} 
\Hat D^*F(\ox,\oy)(v):=\big\{u\in\R^n\;\big|\;(u,-v)\in\Hat N_{{\rm
gph}\,F}(\ox,\oy)\big\},\quad v\in\R^m, 
\end{equation}
\begin{equation}\label{lim-cod}
D^*F(\ox,\oy)(v):=\big\{u\in\R^n\;\big|\;(u,-v)\in N_{{\rm
gph}\,F}(\ox,\oy)\big\},\quad v\in\R^m. 
\end{equation} 
 {If $F$ is differentiable at $\ox$, then $\widehat{D}^*F(\ox)(v)=\{\nabla F(\ox)^*v\}$ for all $v\in\R^m$. Moreover, if $F$ is of class $\mathcal{C}^1$ around $\ox$, then $D^*F(\ox)(v)=\{\nabla F(\ox)^*v\}$ for all $v\in\R^m$, where $\nabla F(\ox)^*$ denotes the transpose of the Jacobian matrix of $F$ at $\ox$.}

\medskip 
Let us consider an extended-real-valued function $\varphi:\R^n\rightarrow\overline{\R}:=(-\infty,\infty]$, and its
{\em effective domain} and {\em epigraph} defined by, respectively,
\begin{equation*}
\dom\ph:=\big\{x\in\R^n\;\big|\;\ph(x)<\infty\big\}\;\mbox{ and
}\;\epi\ph:=\big\{(x,\al)\in\R^{n+1}\;\big|\;\al\ge\ph(x)\big\}.
\end{equation*}  
We always assume that $\varphi$  is proper and lower semicontinuous (l.s.c.). The  \textit{basic/limiting subdifferential} of $\varphi$ at $\bar{x}\in\text{dom}\varphi$  is defined  by
\begin{equation}\label{lim-sub}
\partial\varphi(\ox):=\big\{v\in\R^n\;\big|\;(v,-1)\in
N_{{\rm \text{\rm epi}}\,\varphi}\big(\bar{x},\varphi(\bar{x})\big)\big\}.
\end{equation} 
Recall that the basic subdifferential
$\partial\varphi(\ox)$ reduces to the gradient $\{\nabla\ph(\ox)\}$
if $\ph$ is ${\cal C}^1$-smooth around $\ox$ (or merely strictly
differentiable at this point), and that $\partial\ph(\ox)$ is the
subdifferential of convex analysis if $\ph$ is convex.  {To construct second-order generalized derivatives, one natural approach follows the classical principle that a second-order derivative is obtained by differentiating a first-order derivative. In this spirit, by applying the graphical derivative \eqref{gra-der} to the subgradient mapping \eqref{lim-sub}, we obtain the \textit{subgradient graphical derivative} $D\partial\varphi$, which has been investigated in \cite{chnt,ChieuNghia}. Another second-order construction arising from this viewpoint is the second-order subdifferential. Specifically, by applying the coderivative constructions \eqref{reg-cod} and \eqref{lim-cod} to the limiting subdifferential mapping \eqref{lim-sub}, we arrive at the second-order subdifferentials of $\varphi\colon \R^n\to\oR$ defined by}
\begin{equation}\label{seccombine} 
\breve{\partial}^2\varphi(\bar{x},\ov)(u):=(\widehat{D}^*\partial\varphi)(\bar{x},\ov)(u)\;\mbox{ and }\;{\partial}^2\varphi(\bar{x},\ov)(u):=({D}^* {\partial}\varphi)(\bar{x},\ov)(u),\quad u\in \R^n,
\end{equation} 
known as the {\em combined second-order subdifferential} and the {\em limiting second-order subdifferential} of $\ph$ at $\bar{x}$ relative to $\ov\in\partial\ph(\ox)$, respectively. The concept of second-order subdifferentials was introduced by Mordukhovich in \cite{m92} and has since been extensively developed to establish calculus rules for broad classes of extended-real-valued functions, derive optimality conditions, and characterize convex and generalized convex functions; see, e.g., \cite{ChieuChuongYaoYen,kp18,kp20,KKMP23-2,KKMP23-3,kmp22convex,BorisOutrata,Poli,roc24} and the references therein. The application of second-order subdifferentials to generalized Newton methods and their use in practical areas such as machine learning, statistics, and image processing have also been investigated in \cite{BorisKhanhPhat,kmp24cod,kmptjogo,kmptmp,BorisEbrahim}.  

\medskip 
{Another approach to defining second-order generalized derivative constructions is based on second-order Taylor expansions, as developed in \cite[Chapter 13]{Rockafellar98}.} {We  recall the concept of {\em second subderivatives}, which provides a generalized second-order derivative framework for characterizing the convexity of nonsmooth lower semicontinuous functions.} The concept of second subderivatives was introduced by Rockafellar (see, e.g., \cite{roc88,roc90}) and later developed for calculus rules under weak constraint qualification conditions in \cite{mms}, as well as for Newton-type methods in \cite{kmp24cod,BorisEbrahim}.  To proceed, for $\ox \in \R^n, \ov\in \R^n,$ we  consider the family of second-order finite differences
\begin{equation*}
\Delta^2_\tau\varphi(\bar{x},\ov)(u):=\frac{\varphi(\bar{x}+\tau u)-\varphi(\bar{x})-\tau\langle \ov,u\rangle}{\frac{1}{2}\tau^2}
\end{equation*}
and define the {\em second subderivative} of $\varphi$ at $\ox$ for $\ov\in\R^n$ and $w\in\R^n$ by
\begin{equation}\label{2ndsubder}
d^2\varphi(\ox,\ov)(w):=\liminf_{\tau\downarrow 0\atop u\to w}\Delta^2_\tau\varphi(\ox,\ov)(u).
\end{equation}
 {By Proposition~13.5 in \cite{Rockafellar98}, the function $d^2 \varphi(\ox,\ov)$ is l.s.c. and positively homogeneous of degree $2$, i.e., 
$$
d^2 \varphi(\ox,\ov)(\lm w)=\lm^2d^2\varphi(\ox,\ov) \quad \text{for all }\; \lm >0, \; w\in\R^n,
$$
and $d^2\varphi(\ox,\ov)(0)<\infty.$}
The function $\ph$ is said to be {\em twice epi-differentiable} at $\ox$  {for $\ov$} if for every $w\in\R^n$ and every choice of $\tau_k\downarrow 0$ there exists a sequence $w^k\to w$ such that $\Delta^2_{\tau_k}\varphi(\bar{x},\ov)(w^k)\to d^2\varphi(\ox,\ov)(w)$ as $k \to \infty.$   {Note from \cite[Proposition~13.5]{Rockafellar98} that $d^2\varphi(\ox,\ov)$ is not necessarily proper. When $d^2\varphi(\ox,\ov)$ is proper and $\varphi$ is twice epi-differentiable at $\ox$ for $\ov$, we say that $\varphi$ is {\em properly twice epi-differentiable} at $\ox$ for $\ov$.}
Twice epi-differentiability has been recognized as an important concept of second-order variational analysis with numerous applications to optimization; see the aforementioned monograph by Rockafellar and Wets \cite{Rockafellar98}. 
We next introduce the notion of {\em generalized twice differentiability} of functions, originally proposed in \cite{roc}. This concept has been the subject of renewed investigation in recent works \cite{dsw25,quadcharvar,gtdquad}. Given a  {symmetric matrix} $A \in \R^{n \times n}$, denote the {\em quadratic form} associated with the matrix $A$ by
\begin{equation}\label{qua}
q_A(w) := \la w, Aw \ra\;\mbox{ for all }\; w \in \R^n.
\end{equation} 
A function $q: \R^n \to \overline{\R}$ is called a \textit{generalized quadratic form} if it is expressible as $q = q_A + \delta_L$, where $L$ is a linear subspace of $\R^n$, and where $A \in \R^{n \times n}$ is a symmetric matrix. It is said that $\varphi: \R^n \to \overline{\R}$ is \textit{generalized twice differentiable} at $\bar{x}$ for $\bar{v} \in \partial \varphi(\bar{x})$ if it is twice epi-differentiable at $\bar{x}$ for $\bar{v}$ and if the second-order subderivative $d^2 \varphi(\bar{x},\bar{v})$ is a generalized quadratic form. 

\medskip 
Let us now review some classes of extended-real-valued functions $\varphi\colon\R^n\to\oR$ broadly used in the paper. For $s \in \R$, the function $\varphi$ is called {\em  $s$-convex} on  {a convex set} $\Omega$   if 
\begin{equation}\label{ineqstrongfunction}
{\varphi((1-\lambda)x +\lambda y)\le(1-\lambda)\varphi(x)+\lambda \varphi(y) -\frac{s}{2}\lambda(1-\lambda)\|x-y\|^2,}
\end{equation}
for any $x,y\in\Omega,\;\lambda\in(0,1)$. 
Specifically, the function $\varphi$ is called convex when $s = 0$, {\em $s$-strongly convex} when $s > 0$, and {\em $(-s)$-weakly convex} when $s < 0$.  
An l.s.c.\ function $\varphi$ is called {\em prox-regular} at a point $\bar{x} \in \dom \varphi$ for a vector $\bar{v} \in \partial \varphi(\bar{x})$ if there exist constants $\epsilon > 0$ and $r \ge 0$ such that
\begin{equation}\label{prox}
    \varphi(x) \ge \varphi(u) + \langle v, x-u \rangle - \frac{r}{2} \|x-u\|^2
\end{equation}
holds for all $x \in \mathbb{B}_\epsilon(\bar{x})$ and $(u,v) \in \gph \partial \varphi \cap (\mathbb{B}_\epsilon(\bar{x}) \times \mathbb{B}_\epsilon(\bar{v}))$ with $\varphi(u) < \varphi(\bar{x}) + \epsilon$. If \eqref{prox} is satisfied for every $v \in \partial \varphi(\bar{x})$, we say that $\varphi$ is prox-regular at $\bar{x}$.  
The function $\varphi$ is said to be {\em subdifferentially continuous} at $\bar{x}$ for $\bar{v} \in \partial \varphi(\bar{x})$ if, for any $\epsilon > 0$, there exists $\delta > 0$ such that $|\varphi(x) - \varphi(\bar{x})| < \epsilon$ whenever $(x,v) \in \gph \partial \varphi \cap (\mathbb{B}_\delta(\bar{x}) \times \mathbb{B}_\delta(\bar{v}))$. When this condition holds for all $v \in \partial \varphi(\bar{x})$, we say that $\varphi$ is subdifferentially continuous at $\bar{x}$.  
An important observation is that if $\varphi$ is subdifferentially continuous at $\bar{x}$ for $\bar{v}$, the restriction ``$\varphi(u) < \varphi(\bar{x}) + \epsilon$" in the prox-regularity definition can be omitted.   
Functions that are both prox-regular and subdifferentially continuous are referred to as {\em continuously prox-regular}. This class encompasses many important functions in second-order variational analysis, including commonly used types in applications such as  convex functions, $\mathcal{C}^2$-smooth functions, amenable functions; see \cite[Chapter~13]{Rockafellar98}.  {Moreover, it follows from the subdifferential characterization of weakly convex functions in \cite[Theorem~3.1]{dm05} that any weakly convex function $\varphi$ automatically satisfies condition~\eqref{prox}, with subdifferential continuity also being automatically satisfied. Hence, $\varphi$ is continuously prox-regular.}

\medskip
{We next discuss the relationships among the second-order generalized derivative constructions introduced above, namely second subderivatives, second-order subdifferentials, and subgradient graphical derivatives. First, we have the following inclusions, which describe the relationship between the combined second-order subdifferential and the limiting second-order subdifferential:
\begin{equation}\label{relacombinedlimit}
\breve{\partial}^2\varphi(x,y)(w) \subset  {\partial}^2\varphi(x,y)(w) \quad \text{for all }(x,y)\in\gph\partial\varphi, \; w \in \R^n.
\end{equation}
In general, we do not have an inclusion relationship analogous to \eqref{relacombinedlimit} between second-order subdifferentials and subgradient graphical derivatives. However, when $\varphi$ is twice epi-differentiable and continuously prox-regular at $\ox$ for $\ov \in \partial\varphi(\ox)$, it follows from \cite[Theorem~1.1]{rz97} that we have the inclusion:
$$
(D\partial\varphi)(\ox,\ov)(w) \subset {\partial}^2\varphi(x,y)(w) \quad \text{for all }\; w \in \R^n.
$$
Due to the duality correspondence in \eqref{dua}, we obtain the following relationship between the combined second-order subdifferential and the subgradient graphical derivative:
$$
z \in \breve{\partial}^2\varphi(\ox,\ov)(w) \iff \langle z, u\rangle - \langle w,v\rangle \leq 0 \quad \text{for all }\; v \in (D\partial\varphi)(\ox,\ov)(u),\; u \in \R^n. 
$$
In the case where $\varphi$ is twice epi-differentiable, and continuously prox-regular at $\ox$ for $\ov \in \partial\varphi(\ox)$, it is known from \cite[Theorem 13.40]{Rockafellar98} that:
\begin{equation}\label{graphical2ndsub}
(D\partial\varphi)(\ox,\ov)  = \partial h \quad \text{with }\; h = \frac{1}{2}d^2\varphi(\ox,\ov).
\end{equation}}

\medskip 
The following two well-known constructions play a crucial role in this paper. Given 
an l.s.c.\ function $\varphi:\R^n\to\overline{\R}$ and a parameter value $\lambda>0$, the {\em Moreau envelope} $e_\lambda\varphi$ is  defined by
\begin{equation}\label{Moreau} 
	e_\lambda\varphi(x):=\inf\Big\{\varphi(y)+\frac{1}{2\lambda}\|y-x\|^2\;\Big|\;y\in \R^n\Big\},\quad x\in\R^n,
\end{equation}	
A function $\varphi$ is said to be {\em prox-bounded} if there exists $\lambda>0$ such that $e_\lambda\varphi(x)>-\infty$ for some $x\in\R^n$.  The supremum of the set of all such $\lambda$ is the threshold $\lambda_\varphi$ of the prox-boundedness for $\varphi$.  Note that it follows from \cite[Exercise 1.24 and Theorem 2.26]{Rockafellar98} that if $\varphi$ is $s$-convex,  then $\varphi$ is prox-bounded. \vspace*{0.03in}

Recall  that a function $\ph$ is of {\em class $\mathcal{C}^{1,1}$} around $\ox$ if it is ${\cal C}^1$-smooth and its gradient is Lipschitz continuous around this point. We now recall important properties of the Moreau envelope  that follows from  \cite[Lemma~2.5]{davisdrus22}, \cite[Proposition~13.37]{Rockafellar98} and \cite[Corollary 3.7]{wang10}. 

\begin{Proposition}[\bf Moreau envelopes  of prox-regular functions]\label{C11}  Let $\varphi\colon\R^n\to\oR$ be an l.s.c.\ and  prox-bounded function with threshold $\lambda_\varphi >0$. Suppose that $\varphi$ is continuously prox-regular at $\ox$ for $\ov\in\partial\varphi(\ox)$, then for all sufficiently small numbers $\lambda \in (0,\lambda_\varphi)$ there is a convex neighborhood $U_\lambda$ of $\ox+\lambda\ov$ on which the following hold:
\begin{itemize}
    \item[\bf (i)]  The Moreau envelope $e_\lambda\varphi$ from \eqref{Moreau} is of class ${\cal C}^{1,1}$ on the set $U_\lambda $.

 \item[\bf (ii)] The gradient of $e_\lm\ph$ is calculated by:
	\begin{equation}\label{GradEnvelope} 
		\nabla e_\lambda\varphi(x)=\big(\lambda I+(\partial\varphi)^{-1}\big)^{-1}(x)\;\mbox{ for all }\;x\in U_\lambda.
	\end{equation}
\end{itemize} 
In the case where $\varphi$ is weakly convex, then $\nabla e_\lambda\varphi$  is globally Lipschitz continuous on $\R^n$, and all the above conclusions hold with  $U_\lambda = \R^n$.
\end{Proposition}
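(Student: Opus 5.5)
The first two conclusions come directly from the cited sources, so I will only indicate how they fit together; most of the work goes into the weakly convex case.

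\textbf{Local statements (i)--(ii).} For $\varphi$ prox-bounded and continuously prox-regular at $\ox$ for $\ov$, take $\lambda\in(0,\lambda_\varphi)$ small. By \cite[Proposition~13.37]{Rockafellar98}, $e_\lambda\varphi$ is ${\cal C}^{1,1}$ on a neighborhood $U_\lambda$ of $\ox+\lambda\ov$. The same result says the proximal mapping $P_\lambda\varphi$ is single-valued and Lipschitz there, with
$$
\nabla e_\lambda\varphi=\lambda^{-1}(I-P_\lambda\varphi)=(\lambda I+(\partial\varphi)^{-1})^{-1}.
$$
To check the last identity, let $p=P_\lambda\varphi(x)$ and $v=\lambda^{-1}(x-p)$. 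The optimality condition gives $v\in\partial\varphi(p)$, so $p\in(\partial\varphi)^{-1}(v)$ and $x=\lambda v+p\in(\lambda I+(\partial\varphi)^{-1})(v)$. Prox-regularity makes this localization single-valued. Finally, any neighborhood can be shrunk to an open ball, which gives the convex set $U_\lambda$.

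\textbf{Global statement for weakly convex $\varphi$.} Let $\varphi$ be $r$-weakly convex. Then $\psi:=\varphi+\frac r2\|\cdot\|^2$ is convex, and $\varphi$ is prox-bounded with $\lambda_\varphi\ge 1/r$. For $\lambda<1/r$ and each $x$, the objective
$$
y\mapsto\varphi(y)+\frac1{2\lambda}\|y-x\|^2
$$
is $(\lambda^{-1}-r)$-strongly convex. It is also l.s.c. and proper, so it has a unique minimizer $P_\lambda\varphi(x)$, and this holds for every $x\in\R^n$.

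Next I show $P_\lambda\varphi$ is globally Lipschitz. By \cite[Theorem~3.1]{dm05}, $\partial\varphi+rI$ is monotone. Take $p_i=P_\lambda\varphi(x_i)$, so that $\lambda^{-1}(x_i-p_i)\in\partial\varphi(p_i)$. Monotonicity gives
$$
\langle \lambda^{-1}(x_1-x_2)-(\lambda^{-1}-r)(p_1-p_2),\,p_1-p_2\rangle\ge0.
$$
Hence $\|p_1-p_2\|\le(1-\lambda r)^{-1}\|x_1-x_2\|$.

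It remains to show $e_\lambda\varphi$ is differentiable with gradient $\lambda^{-1}(x-P_\lambda\varphi(x))$. I will follow \cite[Lemma~2.5]{davisdrus22}. Write $e_\lambda\varphi(x)=\frac{1}{2\lambda}\|x\|^2-g(x)$, where
$$
g(x)=\sup_y\Big\{\lambda^{-1}\langle x,y\rangle-\varphi(y)-\frac1{2\lambda}\|y\|^2\Big\}.
$$
This $g$ is convex and finite. Its subdifferential at $x$ consists of $\lambda^{-1}$ times the maximizers, and the maximizer is unique, so $g$ is differentiable. Therefore
$$
\nabla e_\lambda\varphi(x)=\lambda^{-1}(x-P_\lambda\varphi(x)).
$$
This gradient is globally Lipschitz by the bound on $P_\lambda\varphi$ above. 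Formula (ii) then follows on all of $\R^n$ by the inversion argument from the local part.

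\textbf{Main obstacle.} The step I expect to be delicate is identifying the subgradient of $g$ with the maximizers when the supremum runs over a noncompact set. I would handle it with the standard Danskin/conjugate argument. The key observation is that $g$ is the conjugate of the strongly convex function $y\mapsto\varphi(y)+\frac1{2\lambda}\|y\|^2$, evaluated at $x/\lambda$. The conjugate of a strongly convex function is differentiable, and its gradient is the unique maximizer, which settles the identification.
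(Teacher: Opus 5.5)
Your proposal is correct and follows the route the paper itself indicates. The local claims are read off from Rockafellar--Wets, Proposition~13.37 (where the gradient formula really holds for a graphical localization of $\partial\varphi$, as you flag). Your weakly convex argument unpacks Davis--Drusvyatskiy, Lemma~2.5, which the paper cites without proof: strong convexity of $y\mapsto\varphi(y)+\frac{1}{2\lambda}\|y-x\|^2$ for $\lambda<1/r$, monotonicity of $\partial\varphi+rI$ for the Lipschitz bound on the proximal map, and differentiability of the conjugate of a strongly convex function. The one step you leave implicit is global single-valuedness of $(\lambda I+(\partial\varphi)^{-1})^{-1}$, which is needed for equality in (ii) on all of $\R^n$. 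Your monotonicity estimate already supplies it, since it uses only the inclusions $\lambda^{-1}(x_i-p_i)\in\partial\varphi(p_i)$ and therefore forces $p_1=p_2$ when $x_1=x_2$.
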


\section{Characterizations  of Nonsmooth Convex Functions}\label{sec:graphical}
 In this section, we develop characterizations for l.s.c., proper, convex functions  via   the second-order generalized derivative constructions.

\medskip 
 {To proceed with our main results, we need some auxiliary results.} 
Let $\mathcal{L}^m$ be the $\sigma$-algebra of the Lebesgue measurable subsets on $\R^m$. We denote the Lebesgue measure on $\R^n$ and $\R$ by $\mu$ and $\lambda$, respectively. 
\begin{Lemma}\label{measure} Let $\Omega\subset \R^n$ be an open, convex set, and $\Omega_0 \subset \Omega$ such that $\Omega_0\in \mathcal{L}^n$ and $\mu(\Omega\setminus\Omega_0) =0$. Suppose that $a, b \in \Omega$ are distinct points.	Then there exist sequences of vectors $(a^k) \subset \Omega$, $(b^k)\subset \Omega$ such that $a^k \to a, b^k \to b$ and 
	\begin{equation} \label{LebesR}
	\lambda \left(\{t \in [0,1]: a^k + t(b^k-a^k)\in \Omega_0 \} \right) =1 \quad \text{for all }\; k \in \N.
	\end{equation} 
\end{Lemma}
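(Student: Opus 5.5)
Set $N:=\Omega\setminus\Omega_0$, a Lebesgue null set in $\R^n$. Since $a,b\in\Omega$ and $\Omega$ is open, fix $r>0$ with $\mathbb{B}_r(a)\cup\mathbb{B}_r(b)\subset\Omega$. By convexity of $\Omega$, every segment $[a+h,b+h]$ with $\|h\|<r$ lies in $\Omega$. So we look for $a^k=a+h_k$, $b^k=b+h_k$ with $h_k\to 0$, and we must arrange that the segment $[a+h_k,b+h_k]$ meets $N$ in a set of parameters $t$ of one-dimensional measure zero. Then the set in \eqref{LebesR} has measure $1$: it is measurable because $\Omega_0\in\mathcal L^n$ and the affine map $t\mapsto a^k+t(b^k-a^k)$ preserves the null set structure, as the Fubini argument below shows.

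The key step is a Fubini argument. Consider the map $\Phi\colon \mathbb{B}_r(0)\times[0,1]\to\Omega$ given by $\Phi(h,t)=a+h+t(b-a)$. Apply a rotation so that $b-a$ is parallel to the first coordinate axis, and let $h$ range over the hyperplane $H=(b-a)^\perp$ intersected with the ball. The map $(h,t)\mapsto a+h+t(b-a)$ is then an affine bijection from $H\times\R$ onto $\R^n$, with constant Jacobian $\|b-a\|\neq 0$. Hence the preimage
\[
E:=\{(h,t)\in (H\cap\mathbb{B}_r(0))\times[0,1] : \Phi(h,t)\in N\}
\]
is Lebesgue null in $H\times\R\cong\R^{n}$. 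It is measurable after replacing $N$ by a Borel null superset $\widetilde N\supset N$, which only enlarges $E$.

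By Fubini--Tonelli,
\[
0=\mu(E)=\int_{H\cap \mathbb{B}_r(0)}\lambda(E_h)\,dh ,
\]
where $E_h$ is the $h$-section of $E$. Therefore $\lambda(E_h)=0$ for $(n-1)$-a.e.\ $h\in H\cap\mathbb{B}_r(0)$. The full-measure set of such $h$ is dense in $H\cap\mathbb{B}_r(0)$, so it contains a sequence $h_k\to 0$ with $\|h_k\|<r$.

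For these $h_k$, the set $\{t\in[0,1] : a^k+t(b^k-a^k)\in\Omega_0\}$ is the complement in $[0,1]$ of a subset of $E_{h_k}$. This uses that the segment lies in $\Omega$ and that $b^k-a^k=b-a$. So its measure is $1$, and \eqref{LebesR} follows, with measurability coming from the Borel modification.

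When $n=1$ we have $H=\{0\}$, so we simply take $a^k=a$, $b^k=b$, since $N$ is already null on the line. The only delicate point is the measurability and null-set bookkeeping in the Fubini step, which is handled by the Borel null superset $\widetilde N$ together with completeness of Lebesgue measure.
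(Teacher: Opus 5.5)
Your proof is correct, but it takes a different route from the paper's.

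\textbf{The paper's route.} The paper does not run Fubini itself. It enlarges $\Omega_0$ to $\Omega_1:=(\R^n\setminus\Omega)\cup\Omega_0$, whose complement in $\R^n$ is the null set $\Omega\setminus\Omega_0$. It then invokes the global version of the statement, Lemma 4.1 of Chieu--Chuong--Yao--Yen, which applies to sets of full measure in all of $\R^n$. This gives $a^k\to a$ and $b^k\to b$ whose segments lie in $\Omega_1$ for a.e.\ $t$. Openness of $\Omega$ then puts $a^k,b^k\in\Omega$ for large $k$, after discarding finitely many terms. Convexity keeps the segments inside $\Omega$, where $\Omega_1$ and $\Omega_0$ coincide.

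\textbf{Your route.} You prove the local statement directly. You restrict to translates with $\|h\|<r$ from the outset, so the segments never leave $\Omega$ and nothing needs pruning. Your Fubini--Tonelli computation over $(b-a)^\perp$ is essentially the argument behind the cited lemma.

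\textbf{Comparison.} The paper's route is shorter given the literature, and it isolates the only new ingredient: passing from $\R^n$ to an open convex $\Omega$. Yours is self-contained and gives slightly more: $b^k-a^k=b-a$, and almost every small translate works. It also handles measurability of the set in \eqref{LebesR} explicitly, which the paper leaves to the cited result.

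\textbf{One correction.} For measurability, rely on your final justification (a Borel null superset together with completeness of $\lambda$). Do not rely on your opening remark that the affine map $t\mapsto a^k+t(b^k-a^k)$ ``preserves the null set structure.'' For $n\ge 2$ such a map need not pull back Lebesgue measurable sets to measurable sets, because every subset of a line is $\mu$-null. This is also why you cannot simply take $h=0$ when $n\ge 2$.
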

\begin{proof} Putting $\Omega_1 : = (\R^n\setminus\Omega)\cup \Omega_0$, we have $\R^n\setminus\Omega_1 = \Omega\setminus\Omega_0$, which implies that 
$$
\mu (\R^n\setminus\Omega_1) = \mu (\Omega\setminus\Omega_0) =0.
$$
It follows from the openness of $\Omega$ that $\Omega \in \mathcal{L}^n$, and thus $\Omega_1 \in \mathcal{L}^n$. 	Using \cite[Lemma 4.1]{ChieuChuongYaoYen}, there exist sequences of vectors $a^k\to a$ and $b^k \to b$ such that
\begin{equation}\label{omega1}
\lambda \left(\{t \in [0,1]: a^k + t(b^k-a^k)\in \Omega_1 \} \right) =1, \quad \text{for all }\; k \in \N. 
\end{equation} 
Moreover, due to the openness of $\Omega$, $a^k, b^k \in \Omega$ for all sufficiently large $k \in \N$. Without loss of generality, we can assume that $(a^k) \subset \Omega$ and $(b^k)\subset \Omega$. By the convexity of $\Omega$, we have that
\begin{equation}\label{2setomega}
\{t \in [0,1]: a^k + t(b^k-a^k)\in \Omega_1 \} = \{t \in [0,1]: a^k + t(b^k-a^k)\in \Omega_0 \} \quad \forall k \in \N. 
\end{equation} 
Combining \eqref{omega1} and \eqref{2setomega}, we obtain \eqref{LebesR}. 
	
\end{proof}

 {The following auxiliary result provides a characterization of differentiable convex functions with Lipschitz continuous gradients on an open set.}

\begin{Theorem}[\bf characterizations of differentiable convex function on an open set]\label{2ndLipconvex}  Let   $\varphi:\R^n\to\oR$ be differentiable on  a nonempty open convex set $\Omega \subset \R^n$. Suppose that $\nabla \varphi$ is Lipschitz continuous on $\Omega$. Then the following assertions are equivalent:
\begin{itemize}
    \item[\bf (i)] $\varphi$ is convex on $\Omega$.
    \item[\bf (ii)] $\nabla^2 \varphi(x)$ is positive semidefinite for all $x \in \Omega_0$, where
    $$
\Omega_0:= \{x \in \Omega|\; \varphi \; \text{is twice differentiable at } x \}.
$$
\end{itemize}

\end{Theorem}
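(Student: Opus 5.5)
The plan is to prove the two implications separately, with the Lemma~\ref{measure} reduction carrying the weight of (ii)$\Rightarrow$(i). Since $\nabla\varphi$ is Lipschitz on the open convex set $\Omega$, Rademacher's theorem shows that $\nabla\varphi$ is differentiable almost everywhere on $\Omega$. At any point where $\nabla\varphi$ is differentiable, $\varphi$ is twice differentiable in the Fr\'echet sense, with $\nabla^2\varphi(x)$ the Jacobian of $\nabla\varphi$. Hence $\Omega_0$ is Lebesgue measurable and $\mu(\Omega\setminus\Omega_0)=0$. In particular, $\Omega_0$ is exactly the kind of set to which Lemma~\ref{measure} applies.

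For (i)$\Rightarrow$(ii), fix $x\in\Omega_0$ and $w\in\R^n$. For small $t>0$ we have $x\pm tw\in\Omega$. Convexity gives
$$\varphi(x+tw)+\varphi(x-tw)-2\varphi(x)\ge 0.$$
The second-order Taylor expansion at the twice differentiable point $x$ writes the left-hand side as $t^2\langle\nabla^2\varphi(x)w,w\rangle+o(t^2)$. Dividing by $t^2$ and letting $t\downarrow 0$ yields $\langle\nabla^2\varphi(x)w,w\rangle\ge0$.

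For (ii)$\Rightarrow$(i), it suffices to prove monotonicity of the gradient: $\langle\nabla\varphi(b)-\nabla\varphi(a),b-a\rangle\ge0$ for all $a,b\in\Omega$. This is the standard first-order characterization of convexity for differentiable functions on an open convex set. Fix distinct $a,b\in\Omega$ and apply Lemma~\ref{measure} to get $a^k\to a$ and $b^k\to b$ in $\Omega$ such that $a^k+t(b^k-a^k)\in\Omega_0$ for a.e.\ $t\in[0,1]$. Consider $g_k(t):=\langle\nabla\varphi(a^k+t(b^k-a^k)),b^k-a^k\rangle$. This function is Lipschitz on $[0,1]$, hence absolutely continuous, so
$$g_k(1)-g_k(0)=\int_0^1 g_k'(t)\,dt.$$
At every $t$ with $a^k+t(b^k-a^k)\in\Omega_0$, the chain rule applies because $\nabla\varphi$ is Fr\'echet differentiable there. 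It gives $g_k'(t)=\langle\nabla^2\varphi(\cdot)(b^k-a^k),b^k-a^k\rangle\ge0$ by (ii). Since this holds for a.e.\ $t$, we get $\langle\nabla\varphi(b^k)-\nabla\varphi(a^k),b^k-a^k\rangle\ge0$. Letting $k\to\infty$ and using continuity of $\nabla\varphi$ gives the monotonicity inequality for $a,b$.

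The main obstacle is this last step, because a fixed segment $[a,b]$ may lie entirely inside the null set $\Omega\setminus\Omega_0$. That is why Lemma~\ref{measure} is needed to perturb the endpoints, with continuity of $\nabla\varphi$ then passing the inequality back to $a,b$. A second point to check is that the a.e.\ derivative of $g_k$ is given by the Hessian. This holds because Fr\'echet differentiability of $\nabla\varphi$ at a point implies differentiability of its composition with an affine map at that point.
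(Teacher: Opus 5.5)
Your proof is correct. The direction (ii)$\Rightarrow$(i) follows the paper's argument step for step:
\begin{itemize}
\item Rademacher's theorem gives $\mu(\Omega\setminus\Omega_0)=0$.
\item Lemma~\ref{measure} perturbs the endpoints so that almost every point of the segment lies in $\Omega_0$.
\item The Newton--Leibniz formula is applied to $t\mapsto\langle\nabla\varphi(a^k+t(b^k-a^k)),b^k-a^k\rangle$.
\item A limit yields monotonicity of $\nabla\varphi$.
\end{itemize}

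You differ in (i)$\Rightarrow$(ii). The paper does not use a Taylor expansion. It uses monotonicity of $\nabla\varphi$ to show that the graphical derivative $D\nabla\varphi(x)$ is positive semidefinite at every $x\in\Omega$: from $\nabla\varphi(x+t_kw_k)=\nabla\varphi(x)+t_kz_k$ one gets $t_k^2\langle z_k,w_k\rangle\ge 0$. It then specializes to $x\in\Omega_0$, where $D\nabla\varphi(x)(w)=\{\nabla^2\varphi(x)w\}$.

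The paper's route buys a stronger intermediate statement, valid at all points of $\Omega$ and not only where $\varphi$ is twice differentiable. It is also the same monotonicity argument the paper later reuses for $D\partial\varphi$ in Corollary~\ref{convexviagranonsmooth}.

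Your midpoint-convexity/Taylor argument is more classical and self-contained. The one step you should justify is the Peano-remainder expansion when $\nabla\varphi$ is only differentiable at $x$. This is easy: write $\varphi(x+tw)+\varphi(x-tw)-2\varphi(x)=\int_0^t\langle\nabla\varphi(x+sw)-\nabla\varphi(x-sw),w\rangle\,ds$, using continuity of $\nabla\varphi$. The integrand equals $2s\langle\nabla^2\varphi(x)w,w\rangle+o(s)$.
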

\begin{proof}
We firstly clarify the implication [{\bf (i)} $\Longrightarrow$ {\bf (ii)}]. To proceed, suppose that $\varphi$ is convex on $\Omega$, we show that $D\nabla\varphi(x)$ is positive semidefinite for all $x \in \Omega$ in the sense that
\begin{equation}\label{PSDgraphical}
\langle z, w \rangle \geq 0 \quad \text{for all }\; z \in D\nabla\varphi(x)(w),\; x \in \Omega, \; w \in \R^n.
\end{equation} 
Indeed, we pick any 
$x \in \Omega$, $w \in \R^n$ and $z \in D\nabla\varphi(x)(w)$. Then there exist sequences $t_k \downarrow 0$, $w_k \to w, z_k \to z$ such that
$$
(x,\nabla\varphi(x)) + t_k (w_k,z_k) \in \text{\rm gph}\nabla\varphi, \quad \text{for all }\; k \in \N, 
$$
which implies that $\nabla \varphi(x+t_k w_k) = \nabla\varphi(x) + t_k z_k$ for all $k \in \N$. Due to the first-order characterization of differentiable convex functions and  the convexity of $\varphi$ on $\Omega$, we have 
$$
\langle \nabla \varphi(x+t_k w_k) -\nabla\varphi(x), x +t_k w_k - x\rangle \geq 0, \quad \text{for all sufficiently large }\; k \in \N,
$$
or $t_k^2\langle z_k,  w_k\rangle \geq 0$, which implies that $\langle z_k, w_k\rangle \geq 0$ for sufficiently large $k \in \N$. Letting $k \to \infty$ in this inequality, we obtain the inequality $\langle z, w\rangle \geq 0$, which justifies \eqref{PSDgraphical}.  Moreover, it is worth noting that
$$
D\nabla\varphi(x)=\{\nabla^2\varphi(x)\}
\quad \text{for all } x\in\Omega_0.
$$
Combining the latter with \eqref{PSDgraphical},  we obtain {\bf (ii)}. 

\color{black}
\medskip Next, suppose that {\bf (ii)} holds, we show that $\varphi$ is convex on $\Omega$. It follows from the Lipschitz continuity of $\nabla\varphi$ on $\Omega$ and the Rademacher  theorem that $\Omega_0\in \mathcal{L}^n$ and $\mu(\Omega\setminus\Omega_0)=0$. Let $x, y \in \Omega$ and $x \ne y$. By Lemma \ref{measure}, there exist sequences $(x^k)\subset \Omega$, $(y^k)\subset \Omega$ such that $x^k \to x$, $y^k \to y$ and 
$$
\lambda \left(\{t \in [0,1]: x^k + t(y^k-x^k)\in \Omega_0 \} \right) =1 \quad \text{for all }\; k \in \N.
$$
Consider the function $f: [0,1]\to \R$ given by $f(t):= \langle \nabla \varphi(x^k+t(y^k-x^k)),y^k-x^k\rangle$ for all $t \in [0,1]$. Applying the Newton-Leibniz formula to this function, we obtain 
\begin{align}\label{NL}
\langle \nabla\varphi(y^k)-\nabla\varphi(x^k),y^k-x^k\rangle & = f(1) -f(0) = \int_{0}^{1}f'(t)dt \nonumber\\
&=\int_{T^k} \langle \nabla^2 \varphi(x^k+t(y^k-x^k))(y^k-x^k),y^k-x^k\rangle dt
\end{align}
where $T^k:= \{t\in [0,1]|\; x^k + t(y^k-x^k)\in \Omega_0 \}$. Moreover, for any $t \in T^k$, we have $x^k + t(y^k-x^k)\in \Omega_0$, which implies that 
\begin{equation}\label{NL2}
\langle \nabla^2 \varphi(x^k+t(y^k-x^k))(y^k-x^k),y^k-x^k\rangle \geq 0 \quad \text{for all }\; t \in T^k.
\end{equation} 
Combining \eqref{NL} and \eqref{NL2}, we have $\langle \nabla\varphi(y^k)-\nabla\varphi(x^k),y^k-x^k\rangle \geq 0$ for all $k \in \N$. Letting $k \to \infty$, we obtain the inequality $\langle \nabla\varphi(y)-\nabla\varphi(x),y-x\rangle \geq 0$, which justifies the convexity of $\varphi$ on $\Omega$. The proof is complete. 
\end{proof}
{Another important auxiliary result provides a characterization of convex functions in terms of their Moreau envelopes; see \cite[Theorem 3.17]{wang10}.}

\begin{Lemma}[\bf characterization of convex functions via Moreau envelopes]\label{convexMoreau}  Suppose that  $\varphi:\R^n\to\overline{\R}$ is a proper, l.s.c.\ and prox-bounded function  with
 threshold $\lambda_\varphi>0$. Then $\varphi$ is convex if and only if  the Moreau envelope $e_\lambda \varphi$ is convex for some $\lambda\in(0,\lambda_\varphi)$.  
\end{Lemma}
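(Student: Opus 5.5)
The forward implication is immediate: if $\varphi$ is convex, then $e_\lambda\varphi$ is convex for \emph{every} $\lambda>0$. It is the infimal convolution of $\varphi$ with the convex function $\frac{1}{2\lambda}\|\cdot\|^2$. Equivalently, it is the partial minimization in $y$ of the jointly convex function $(x,y)\mapsto\varphi(y)+\frac{1}{2\lambda}\|y-x\|^2$. Since the threshold of a proper convex l.s.c.\ function is $\lambda_\varphi=\infty$, every $\lambda\in(0,\lambda_\varphi)$ qualifies.

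For the converse, I would fix $\lambda\in(0,\lambda_\varphi)$ with $e_\lambda\varphi$ convex and recover $\varphi$ by duality. The starting identity is
\begin{equation*}
e_\lambda\varphi(x)=\frac{1}{2\lambda}\|x\|^2-\frac{1}{\lambda}\sup_{y}\Big\{\langle x,y\rangle-\big(\lambda\varphi(y)+\tfrac12\|y\|^2\big)\Big\},
\end{equation*}
which reads $\frac{1}{2\lambda}\|x\|^2-e_\lambda\varphi(x)=\frac{1}{\lambda}g^*(x)$ with $g:=\lambda\varphi+\frac12\|\cdot\|^2$. Prox-boundedness with $\lambda<\lambda_\varphi$ makes $g$ proper with $g^*$ finite, and $g^{**}=\overline{\rm co}\,g$.

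The key step is to show that convexity of $e_\lambda\varphi$ forces $g$ to be convex, i.e.\ $g=g^{**}$. Convexity of $e_\lambda\varphi$ means $\frac{1}{\lambda}g^*-\frac{1}{2\lambda}\|\cdot\|^2$ is concave. Equivalently, $g^*$ is ``$1$-smooth from above'': $g^*(x+h)\le g^*(x)+\langle p,h\rangle+\frac12\|h\|^2$ for $p\in\partial g^*(x)$. This upper quadratic bound together with convexity yields that $g^*$ is $\mathcal C^{1,1}$ with $\nabla g^*$ $1$-Lipschitz, and then $g^{**}=(g^*)^*$ is $1$-strongly convex. Hence $g^{**}-\frac12\|\cdot\|^2$ is convex.

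The main obstacle is then to show $g^{**}=g$ rather than merely $g^{**}\le g$. Here I would use a standard attainment argument. For a.e.\ $x$, or by density for every $x$, the point $y=\nabla g^*(x)$ satisfies $g^{**}(y)=\langle x,y\rangle-g^*(x)$. On the other hand, by Proposition~\ref{C11}-type reasoning, minimizers exist in the definition of $e_\lambda\varphi$ (prox-boundedness plus l.s.c.), and they lie in $\partial g^*(x)=\{\nabla g^*(x)\}$. So the supremum defining $g^*(x)$ is attained at a point $y$ with $g(y)=\langle x,y\rangle-g^*(x)=g^{**}(y)$. The range of $\nabla g^*$ covers $\operatorname{dom}\partial g^{**}$, which is dense in $\operatorname{dom}g^{**}$. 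Combining this with lower semicontinuity of $g$ and the fact that $g^{**}$ is continuous along segments of its domain, I would conclude $g=g^{**}$ on all of $\R^n$. This density and closure step is where care is needed.

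Finally, $\lambda\varphi=g^{**}-\frac12\|\cdot\|^2$ is convex, so $\varphi$ is convex. Alternatively, one could simply invoke \cite[Theorem 3.17]{wang10}, as the paper indicates.
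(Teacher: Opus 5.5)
Your proof is correct. It is not the paper's route only in the sense that the paper gives no argument of its own: the lemma is imported verbatim from \cite[Theorem 3.17]{wang10}.

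What you supply is a self-contained conjugate-duality proof. You write $e_\lambda\varphi=\frac{1}{2\lambda}\|\cdot\|^2-\frac{1}{\lambda}g^*$ with $g=\lambda\varphi+\frac12\|\cdot\|^2$, so convexity of $e_\lambda\varphi$ becomes convexity of $\frac12\|\cdot\|^2-g^*$. This forces $g^*$ to be differentiable with $1$-Lipschitz gradient, so $g^{**}$ is $1$-strongly convex. Proximal points then give $P_\lambda\varphi=\nabla g^*$ and $g=g^{**}$ on $\mathrm{rge}\,\nabla g^*=\mathrm{dom}\,\partial g^{**}$, and a closure argument extends this equality to all of $\R^n$. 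Compared with the citation, your argument uses only prox-boundedness and lower semicontinuity, which are exactly the lemma's hypotheses. It also yields extra information: $P_\lambda\varphi$ is single-valued and $1$-Lipschitz, and $\lambda\varphi+\frac12\|\cdot\|^2$ equals its closed convex hull.

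Three points should be tightened:
\begin{itemize}
\item The ``a.e./density'' hedge is unnecessary. Once $g^*$ is differentiable everywhere, the Fenchel equality $g^{**}(\nabla g^*(x))=\langle x,\nabla g^*(x)\rangle-g^*(x)$ holds at every $x$.
\item Existence of proximal points for $\lambda\in(0,\lambda_\varphi)$ comes from \cite[Theorem 1.25]{Rockafellar98} (prox-boundedness plus l.s.c.). It does not come from Proposition~\ref{C11}, which requires prox-regularity.
\item In the closure step, continuity of $g^{**}$ along an arbitrary segment in its domain is not enough: the approximating points must lie in $\mathrm{dom}\,\partial g^{**}$. Fix $a\in\mathrm{ri}\,\mathrm{dom}\,g^{**}$ and $y\in\mathrm{dom}\,g^{**}$. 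The points $y_t=(1-t)a+ty$ with $t\in[0,1)$ lie in $\mathrm{ri}\,\mathrm{dom}\,g^{**}\subset\mathrm{dom}\,\partial g^{**}$. For a closed proper convex function, $g^{**}(y_t)\to g^{**}(y)$ as $t\uparrow 1$. Hence $g(y)\le\liminf_{t\uparrow 1}g(y_t)=g^{**}(y)\le g(y)$. Points outside $\mathrm{dom}\,g^{**}$ are trivial because $g\ge g^{**}$.
\end{itemize}
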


{Now we are ready to present the following new characterizations of convex functions in terms of second subderivatives in \eqref{2ndsubder}.}
\begin{Theorem}[\bf convexity of functions via second subderivatives]\label{cvvia2ndsub} Let $\varphi:\R^n \to \oR$ be a proper, l.s.c and weakly convex function. Then the following assertions are equivalent: 
\begin{itemize}
    \item[\bf (i)] $\varphi$ is convex.
    \item[\bf (ii)] For all $w \in \R^n$, $(x,v) \in \gph \partial \varphi$, we have $d^2\varphi(x,v)(w) \geq 0$.
    \item[\bf (iii)] For all $w \in \R^n$,  $(x,v) \in \gph \partial \varphi$ such that $\varphi$ is twice epi-differentiable at $x$ for $v$, we have  $d^2\varphi(x,v)(w) \geq 0$.
    \item[\bf (iv)] For all $w \in \R^n$,   $(x,v) \in \gph \partial \varphi$ such that $\varphi$ is generalized twice differentiable at $x$ for $v$, we have  $d^2\varphi(x,v)(w) \geq 0$. 
\end{itemize}
\end{Theorem}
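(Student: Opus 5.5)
The implication chain is (i)$\Rightarrow$(ii)$\Rightarrow$(iii)$\Rightarrow$(iv)$\Rightarrow$(i). The two middle implications are trivial, since (iii) and (iv) only restrict the set of pairs $(x,v)$ to which (ii) applies. For (i)$\Rightarrow$(ii), take $(x,v)\in\gph\partial\varphi$. Because $\varphi$ is convex, $v$ is a subgradient in the sense of convex analysis, so
$$\varphi(x+\tau u)-\varphi(x)-\tau\langle v,u\rangle\ge 0 \quad\text{for all }\tau>0,\ u\in\R^n.$$
Hence every second-order difference quotient $\Delta^2_\tau\varphi(x,v)(u)$ is nonnegative, and taking the $\liminf$ in \eqref{2ndsubder} gives $d^2\varphi(x,v)(w)\ge0$.

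The substantive step is (iv)$\Rightarrow$(i), and I would pass through the Moreau envelope. Since $\varphi$ is weakly convex, it is prox-bounded. By Proposition~\ref{C11}, for all sufficiently small $\lambda>0$ the envelope $e_\lambda\varphi$ is $\mathcal C^{1,1}$ on all of $\R^n$, and
$$\nabla e_\lambda\varphi=(\lambda I+(\partial\varphi)^{-1})^{-1}.$$
By Lemma~\ref{convexMoreau}, it suffices to show that $e_\lambda\varphi$ is convex. By Theorem~\ref{2ndLipconvex} applied with $\Omega=\R^n$, this in turn reduces to showing that $\nabla^2 e_\lambda\varphi(\bar y)$ is positive semidefinite at every point $\bar y$ where $e_\lambda\varphi$ is twice differentiable.

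Fix such a point $\bar y$, and write $\bar v=\nabla e_\lambda\varphi(\bar y)$ and $\bar x=\bar y-\lambda\bar v$, so that $(\bar x,\bar v)\in\gph\partial\varphi$. Set $H=\nabla^2 e_\lambda\varphi(\bar y)$, which is symmetric. Since $\nabla e_\lambda\varphi$ is differentiable at $\bar y$, we have $D\nabla e_\lambda\varphi(\bar y)(z)=\{Hz\}$. The graph of $\partial\varphi$ is the image of $\gph\nabla e_\lambda\varphi$ under the linear isomorphism $(y,v)\mapsto(y-\lambda v,v)$, and this gives
$$D\partial\varphi(\bar x,\bar v)(z-\lambda Hz)=\{Hz\}\quad\text{for all } z\in\R^n.$$
So the graph of $D\partial\varphi(\bar x,\bar v)$ is the linear subspace $\{((I-\lambda H)z,Hz)\}$. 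I then need to conclude that $\varphi$ is generalized twice differentiable at $\bar x$ for $\bar v$. This is where I would invoke the known results from \cite{roc,gtdquad,quadcharvar}. For a continuously prox-regular function, twice differentiability of the envelope (equivalently, $\gph D\partial\varphi$ being a subspace, i.e.\ proto-differentiability with generalized linear derivative) is equivalent to twice epi-differentiability with $d^2\varphi(\bar x,\bar v)$ a generalized quadratic form. Moreover, \eqref{graphical2ndsub} then holds:
$$D\partial\varphi(\bar x,\bar v)=\partial h,\qquad h=\tfrac12 d^2\varphi(\bar x,\bar v).$$
This identification is the main obstacle. If it cannot be quoted directly, I would prove it by hand, using the formula $e_\lambda(d^2\varphi)$ for the second-order expansion of $e_\lambda\varphi$ from \cite[Ex.~13.45]{Rockafellar98}.

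Granting this, hypothesis (iv) gives $h\ge0$. Since $h$ is a generalized quadratic form, $h=\tfrac12 q_A+\delta_L$ with $\partial h(u)=Au+L^\perp$ for $u\in L$, and $h\ge0$ means $A$ is positive semidefinite on $L$. For any $(u,p)$ in the graph of $\partial h$ we get $\langle p,u\rangle=\langle Au,u\rangle\ge0$. Applying this to $u=(I-\lambda H)z$ and $p=Hz$ yields
$$\langle Hz,z\rangle-\lambda\|Hz\|^2\ge0,$$
and therefore $\langle Hz,z\rangle\ge\lambda\|Hz\|^2\ge0$ for every $z$. Thus $H$ is positive semidefinite. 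Theorem~\ref{2ndLipconvex} then shows $e_\lambda\varphi$ is convex, and Lemma~\ref{convexMoreau} shows $\varphi$ is convex.
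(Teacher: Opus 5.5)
Your proof is correct. Its architecture is the same as the paper's: Proposition~\ref{C11}, then Theorem~\ref{2ndLipconvex} applied to $e_\lambda\varphi$, then Lemma~\ref{convexMoreau}. Your (i)$\Rightarrow$(ii) via the subgradient inequality is just what the cited \cite[Proposition 13.20]{Rockafellar98} amounts to.

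The real difference is how you pass from $d^2\varphi(\bar x,\bar v)\ge 0$ to positive semidefiniteness of $H=\nabla^2 e_\lambda\varphi(\bar y)$. Both routes rely on the same external fact, \cite[Theorem~5.10]{gtdquad}, for generalized twice differentiability at $(\bar x,\bar v)$.

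\textbf{The paper's route.} It uses the second-order envelope identity $d^2 e_\lambda\varphi(\bar x+\lambda\bar v,\bar v)=e_{\lambda/2}[d^2\varphi(\bar x,\bar v)]$ from \cite[Proposition~5.5]{gtdquad}. The left side is $\langle Hw,w\rangle$, and the right side is the envelope of a nonnegative function, hence nonnegative. No structure of $d^2\varphi(\bar x,\bar v)$ beyond nonnegativity is used.

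\textbf{Your route.} You work one order lower. The linear isomorphism $(y,v)\mapsto(y-\lambda v,v)$ maps $\gph\nabla e_\lambda\varphi$ onto $\gph\partial\varphi$; this is just \eqref{GradEnvelope}. It identifies $\gph D\partial\varphi(\bar x,\bar v)$ with $\{((I-\lambda H)z,Hz)\}$. Then \eqref{graphical2ndsub} together with $\partial(\tfrac12 q_A+\delta_L)(u)=Au+L^\perp$ on $L$ (the content of Lemma~\ref{d2andD}) gives $\langle Hz,(I-\lambda H)z\rangle\ge 0$.

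\textbf{What each buys.} Your route replaces \cite[Proposition~5.5]{gtdquad} with the classical \cite[Theorem~13.40]{Rockafellar98}. It also yields the sharper bound $\langle Hz,z\rangle\ge\lambda\|Hz\|^2$. It is essentially the argument the paper gives for Theorem~\ref{2ndcoderivativeconvex}, transplanted from regular coderivatives to graphical derivatives. The paper's route is shorter.

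Two small inaccuracies, neither affecting the argument:
\begin{itemize}
\item $D\partial\varphi(\bar x,\bar v)((I-\lambda H)z)$ contains $Hz$ but need not equal $\{Hz\}$, because $I-\lambda H$ can be singular. For $\varphi=\delta_{\{0\}}$ one has $H=\lambda^{-1}I$ and $D\partial\varphi(0,\bar v)(0)=\R^n$. Your subsequent description of the graph as $\{((I-\lambda H)z,Hz)\}$ is the correct statement, and it is all you use.
\item The fallback envelope formula should read $d^2 e_\lambda\varphi(\bar x+\lambda\bar v,\bar v)=e_{\lambda/2}[d^2\varphi(\bar x,\bar v)]$. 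Equivalently, $\tfrac12 d^2 e_\lambda\varphi(\bar x+\lambda\bar v,\bar v)=e_\lambda[\tfrac12 d^2\varphi(\bar x,\bar v)]$, not $e_\lambda$ applied to $d^2\varphi(\bar x,\bar v)$ itself.
\end{itemize}
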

\begin{proof} The implication [{\bf (i)} $\Longrightarrow$ {\bf (ii)}] follows from \cite[Proposition 13.20]{Rockafellar98}. It is obvious that {\bf (ii)} $\Longrightarrow$ {\bf (iii)} $\Longrightarrow$ {\bf (iv)}, and thus it remains only to verify the implication [{\bf (iv)} $\Longrightarrow$ {\bf (i)}]. Indeed, using Proposition \ref{C11},  the weak convexity of $\varphi$ tells us that $\varphi$ is prox-bounded with threshold $\lambda_\varphi>0$,    the Moreau envelope $e_\lambda\varphi$ is continuously differentiable  with Lipschitzian derivative  on $\R^n$ for any sufficiently small  $\lambda \in (0,\lambda_\varphi)$, and \eqref{GradEnvelope} is satisfied with $U_\lambda =\R^n$.  Pick any such $\lambda $, we next show that $e_\lambda \varphi$ is convex. To proceed, fix any $u \in \R^n$ such that $e_\lambda\varphi$ is twice differentiable at $u$, we will verify that $\nabla^2 e_\lambda \varphi(u)$ is positive semidefinite. Putting 
$$
x:= u - \lambda \nabla e_\lambda \varphi(u) \quad \text{and }\; v:= \nabla e_\lambda \varphi(u),
$$
we deduce from \eqref{GradEnvelope} that $(x,v) \in \gph\partial\varphi$, and $u= x + \lambda v$. {It follows from \cite[Theorem~5.10]{gtdquad} that} $\varphi$ is generalized twice differentiable at $x$ for $v$.  Since {\bf (iv)} holds, we have $d^2\varphi(x,v) \geq 0$. {It follows from \cite[Proposition 5.5]{gtdquad} that}
$$
{e_{\lambda/2} \left[d^2 \varphi(x,v) \right](w) = d^2 e_\lambda \varphi \left(x+\lambda v,  v \right)(w) =  \langle \nabla^2 e_\lambda\varphi(u) w, w\rangle \quad \text{for all }\; w \in \R^n,}
$$
which implies that $\nabla^2 e_\lambda \varphi(u)$ is positive semidefinite since $d^2\varphi(x,v) \geq 0$.   Therefore, $e_\lambda\varphi$ is convex by Theorem \ref{2ndLipconvex}. Applying Lemma \ref{convexMoreau}, we conclude that $\varphi$ is convex, which completes the proof.   

\end{proof}

Using the above characterizations of convex functions, we derive the corresponding results for strongly convex functions through second subderivatives.
\begin{Corollary}[\bf strong convexity via second  subderivatives]\label{co:strong} Let $\varphi:\R^n \to \oR$ be a proper, l.s.c and weakly convex function. {For $\kappa >0$, the following assertions are equivalent:}
\begin{itemize}
    \item[\bf (i)] {$\varphi$ is $\kappa$-strongly convex.}
    \item[\bf (ii)] For all $w \in \R^n$, $(x,v) \in \gph \partial \varphi$, we have $d^2\varphi(x,v)(w) \geq \kappa\|w\|^2$.
    \item[\bf (iii)] For all $w \in \R^n$,   $(x,v) \in \gph \partial \varphi$ such that $\varphi$ is twice epi-differentiable at $x$ for $v$, we have $d^2\varphi(x,v)(w) \geq \kappa\|w\|^2$.
    \item[\bf (iv)] For all $w \in \R^n$,   $(x,v) \in \gph \partial \varphi$ such that $\varphi$ is generalized twice differentiable at $x$ for $v$, we have $d^2\varphi(x,v)(w) \geq \kappa\|w\|^2$.
\end{itemize}
\end{Corollary}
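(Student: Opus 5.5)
The plan is to reduce everything to Theorem~\ref{cvvia2ndsub} by subtracting the quadratic $\frac{\kappa}{2}\|\cdot\|^2$. Set $\psi(x):=\varphi(x)-\frac{\kappa}{2}\|x\|^2$. From \eqref{ineqstrongfunction}, using the identity
\[
(1-\lambda)\|x\|^2+\lambda\|y\|^2-\|(1-\lambda)x+\lambda y\|^2=\lambda(1-\lambda)\|x-y\|^2,
\]
a function is $s$-convex if and only if subtracting $\frac{s}{2}\|\cdot\|^2$ yields a $0$-convex function. In the same way, $\varphi$ is $\kappa$-strongly convex if and only if $\psi$ is convex. Moreover, if $\varphi$ is $\rho$-weakly convex, then $\psi$ is $(\rho+\kappa)$-weakly convex, and it is clearly proper and l.s.c. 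So $\psi$ satisfies the standing hypotheses of Theorem~\ref{cvvia2ndsub}.

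Next I transfer each object through the smooth quadratic perturbation $q(x)=\frac{\kappa}{2}\|x\|^2$, which is $\mathcal{C}^2$.

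\textbf{First-order sum rule.} We have $\partial\psi(x)=\partial\varphi(x)-\kappa x$. Hence $(x,v)\in\gph\partial\varphi$ if and only if $(x,v-\kappa x)\in\gph\partial\psi$.

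\textbf{Second-order difference quotients.} A direct computation with $\Delta^2_\tau$ gives
\[
\Delta^2_\tau\psi(x,v-\kappa x)(u)=\Delta^2_\tau\varphi(x,v)(u)-\kappa\|u\|^2,
\]
since the quadratic's second-order quotient is exactly $\kappa\|u\|^2$. Because $\|u\|^2\to\|w\|^2$ continuously as $u\to w$, taking $\liminf$ yields
\[
d^2\psi(x,v-\kappa x)(w)=d^2\varphi(x,v)(w)-\kappa\|w\|^2 .
\]

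\textbf{Twice epi-differentiability.} The same identity along any sequence $\tau_k\downarrow0$, $w^k\to w$ shows that $\varphi$ is twice epi-differentiable at $x$ for $v$ if and only if $\psi$ is twice epi-differentiable at $x$ for $v-\kappa x$.

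\textbf{Generalized quadratic forms.} If $d^2\varphi(x,v)=q_A+\delta_L$, then $d^2\psi(x,v-\kappa x)=q_{A-\kappa I}+\delta_L$, and conversely. So generalized twice differentiability is also preserved.

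With these correspondences, each assertion (ii), (iii), (iv) for $\varphi$ with bound $\kappa\|w\|^2$ is literally the corresponding assertion of Theorem~\ref{cvvia2ndsub} for $\psi$ with bound $0$. The pairs $(x,v)$ range over $\gph\partial\varphi$ exactly as $(x,v-\kappa x)$ ranges over $\gph\partial\psi$. Theorem~\ref{cvvia2ndsub} then gives (ii)$\Leftrightarrow$(iii)$\Leftrightarrow$(iv)$\Leftrightarrow$ convexity of $\psi$, which is equivalent to (i).

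The main point needing care is the sum rule $\partial(\varphi-q)=\partial\varphi-\nabla q$ for the limiting subdifferential \eqref{lim-sub}. This is standard for $\mathcal{C}^1$ perturbations (e.g.\ \cite[Exercise~8.8]{Rockafellar98}) and is what I would cite. The remaining steps are routine bookkeeping of the second-order quotients.
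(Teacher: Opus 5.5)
Your proposal is correct and follows essentially the same route as the paper: both pass to $\psi=\varphi-\frac{\kappa}{2}\|\cdot\|^2$, use $\partial\psi(x)=\partial\varphi(x)-\kappa x$ together with $d^2\varphi(x,v)(w)=d^2\psi(x,v-\kappa x)(w)+\kappa\|w\|^2$, and then apply Theorem~\ref{cvvia2ndsub} to $\psi$. The differences are minor. You derive the second-subderivative identity and the transfer of (generalized) twice epi-differentiability directly from the difference quotients, where the paper cites external sum rules. You also explicitly check that $\psi$ is weakly convex, which is needed to apply the theorem but which the paper leaves implicit.
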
 
\begin{proof}  We first verify the implication [{\bf (i)} $\Longrightarrow$ {\bf (ii)}]. Indeed, suppose that  {$\varphi$ is $\kappa$-strongly convex}, and consider the function  {$\psi: \R^n\to \oR$} given by 
\begin{equation}\label{shiftpsi}
\psi(x)= \varphi(x) -\frac{\kappa}{2}\|x\|^2 \quad \text{for all } x \in \R^n.
\end{equation}
Using the sum rule for limiting subdifferentials taken from \cite[Proposition 1.30]{Mor18}, we have 
$$
\partial \psi(x) = \partial \varphi(x) - \kappa x
\quad \text{for all } x \in \R^n. 
$$ 
Using the sum rule for second subderivatives in \cite[Proposition 4.8]{gtdquad}, we obtain the equality
\begin{equation}\label{sumrule2ndsub}
d^2 \varphi(x,v)(w) = d^2 \psi(x,v-\kappa x)(w) + \kappa \|w\|^2 \quad \text{for all }\; (x,v) \in \gph\partial\varphi,\; w \in \R^n.
\end{equation}
Since  {$\varphi$ is $\kappa$-strongly convex,} it follows that $\psi$ is convex, and thus $d^2\psi(x,y)(w) \geq 0$ for all $(x,y) \in \gph \partial\psi$ and $w \in \R^n$ by using Theorem \ref{cvvia2ndsub}. Combining the latter with  \eqref{sumrule2ndsub}, we obtain {\bf (ii)}. 

\medskip 
It is obvious that {\bf (ii)} $\Longrightarrow$ {\bf (iii)} $\Longrightarrow$ {\bf (iv)}, and thus it remains only to verify the implication [{\bf (iv)} $\Longrightarrow$ {\bf (i)}]. Indeed, suppose that {\bf (iv)} holds. Consider $\psi$ given in \eqref{shiftpsi}, and we show that $\psi$ is convex by using Theorem \ref{cvvia2ndsub}. To proceed, fix any $w \in \R^n$, $(x,y) \in \gph\partial\psi$ such that $\psi$ is generalized twice differentiable at $x$ for $y$. 
Using \cite[Proposition 4.7]{gtdquad}, we deduce that $\varphi$ is generalized twice differentiable at $x$ for $y +\kappa x\in \partial\varphi(x)$. By  {\bf (iv)}, we deduce that $d^2\varphi(x,y+\kappa x)(w) \geq \kappa\|w\|^2$. Combining this with \eqref{sumrule2ndsub}, it follows that $d^2\psi(x,y)(w) \geq 0$, and thus $\psi$ is convex by  Theorem \ref{cvvia2ndsub}. This means that $\varphi$ is convex, which completes the proof. 
    
\end{proof}

 {Our next characterization of nonsmooth convex functions is based on the notion of the \textit{subgradient graphical derivative}. Our next results are direct consequences of Theorem~\ref{cvvia2ndsub} and Corollary~\ref{co:strong}. To proceed with our next  results, we first state the following lemma, which is an immediate consequence of \eqref{graphical2ndsub} and \cite[Lemma 3.6]{chnt}.}

\begin{Lemma}\label{d2andD} Let $\varphi:\R^n\to \oR$ be continuously prox-regular and properly twice epi-differentiable  at $\ox$ for $\ov \in \partial\varphi(\ox)$. Then for any $w \in \R^n$, $z \in (D\partial\varphi)(\ox,\ov)(w)$, we have 
$$
\langle z , w\rangle = d^2\varphi(\ox,\ov)(w). 
$$
\end{Lemma}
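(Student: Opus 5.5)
The plan is to combine the identification \eqref{graphical2ndsub} with the structure of subdifferentials of positively homogeneous functions. Set $h:=\frac{1}{2}d^2\varphi(\ox,\ov)$. By the assumptions (continuous prox-regularity and twice epi-differentiability at $\ox$ for $\ov$), \cite[Theorem 13.40]{Rockafellar98} gives $(D\partial\varphi)(\ox,\ov)=\partial h$. Hence any $z\in(D\partial\varphi)(\ox,\ov)(w)$ is a limiting subgradient $z\in\partial h(w)$. It therefore suffices to show that
$$
\langle z,w\rangle=2h(w)\quad\text{for all } z\in\partial h(w).
$$
This is the Euler-type identity for subgradients of a positively homogeneous function of degree $2$, which is the content of \cite[Lemma 3.6]{chnt}.

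To verify this identity directly, I would use the following facts. By \cite[Proposition 13.5]{Rockafellar98}, $h$ is l.s.c. and positively homogeneous of degree $2$. Properness excludes the value $-\infty$, so $h(0)=0$, and $w\in\dom h$ whenever $\partial h(w)\ne\emptyset$.

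First take a regular subgradient $z\in\widehat\partial h(w)$ and test it along the ray $\tau\mapsto \tau w$ with $\tau\to1$. Since $h(\tau w)=\tau^2h(w)$, the regular subgradient inequality gives
$$
\tau^2h(w)-h(w)\ \ge\ (\tau-1)\langle z,w\rangle+o(|\tau-1|).
$$
Dividing by $\tau-1$ and letting $\tau\downarrow1$ yields $2h(w)\ge\langle z,w\rangle$. Letting $\tau\uparrow1$ yields the reverse inequality. Hence $\langle z,w\rangle=2h(w)$.

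For a limiting subgradient $z$, take $w_k\to w$ and $z_k\in\widehat\partial h(w_k)$ with $z_k\to z$ and $h(w_k)\to h(w)$; the last condition is part of the definition of limiting subgradients via $h$-attentive convergence. Passing to the limit in $\langle z_k,w_k\rangle=2h(w_k)$ gives $\langle z,w\rangle=2h(w)=d^2\varphi(\ox,\ov)(w)$.

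The main obstacle is the limiting step. Since $h$ is only l.s.c., one must make sure the convergence $h(w_k)\to h(w)$ is available. The graph of $\partial h$ in \eqref{lim-sub} is defined through normals to the epigraph, so I would note that the standard equivalence \cite[Theorem 8.9]{Rockafellar98} yields exactly such $h$-attentive sequences. Horizon normals do not enter, because they correspond to $(v,0)$ rather than to subgradients. Once this is in place, the rest is the one-dimensional homogeneity computation above.
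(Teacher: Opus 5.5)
Your proposal is correct and follows the paper's route. The paper gives no separate proof. It states the lemma as an immediate consequence of \eqref{graphical2ndsub}, i.e.\ $(D\partial\varphi)(\ox,\ov)=\partial h$ with $h=\frac{1}{2}d^2\varphi(\ox,\ov)$, together with the Euler-type identity from \cite[Lemma 3.6]{chnt}. Your direct check of that identity is a sound unpacking of the cited lemma: you test regular subgradients along the ray $\tau w$ with $\tau\to 1$ from both sides, and then pass to limiting subgradients through $h$-attentive sequences.
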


Now we are ready to derive another characterization of convex functions  via subgradient graphical derivatives under the additional weak convexity. 

\begin{Corollary}[\bf characterization of nonsmooth convex functions via subgradient graphical derivatives]\label{convexviagranonsmooth} Let $\varphi:\R^n \to \oR$ be a proper, l.s.c and weakly convex function. Then the following assertions are equivalent: 
\begin{itemize}
    \item[\bf (i)] $\varphi$ is convex.
    \item[\bf (ii)] For all $(x,v) \in \gph \partial \varphi$,  $(D\partial\varphi)(x,y)$ is positive semidefinite in the sense that
\begin{equation}\label{PSDgraphicalconvex}
\langle z, w\rangle \geq 0 \quad \text{for all } \; z \in (D\partial\varphi)(x,y)(w),\; w \in \R^n. 
\end{equation}
    \item[\bf (iii)] For all $(x,v) \in \gph \partial \varphi$ such that $\varphi$ is twice epi-differentiable at $x$ for $v$, we have  $(D\partial\varphi)(x,y)$ is positive semidefinite in the sense of \eqref{PSDgraphicalconvex}. 
    \item[\bf (iv)] For all $(x,v) \in \gph \partial \varphi$ such that $\varphi$ is generalized twice differentiable at $x$ for $v$, we have  $(D\partial\varphi)(x,y)$ is positive semidefinite in the sense of \eqref{PSDgraphicalconvex}.
\end{itemize}
\end{Corollary}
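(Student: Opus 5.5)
The plan is to prove the chain (i) $\Longrightarrow$ (ii) $\Longrightarrow$ (iii) $\Longrightarrow$ (iv) $\Longrightarrow$ (i), with the last implication reduced to Theorem~\ref{cvvia2ndsub} through Lemma~\ref{d2andD}. Throughout, the pair $(x,y)$ in \eqref{PSDgraphicalconvex} is read as $(x,v)$.

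For (i) $\Longrightarrow$ (ii) I argue directly from monotonicity, as in the first half of the proof of Theorem~\ref{2ndLipconvex}. If $\varphi$ is convex, then $\partial\varphi$ is monotone. Take $z\in(D\partial\varphi)(x,v)(w)$. By definition there are $t_k\downarrow 0$, $w_k\to w$ and $z_k\to z$ such that $v+t_kz_k\in\partial\varphi(x+t_kw_k)$. Monotonicity then gives
$$
\langle (v+t_kz_k)-v,\,t_kw_k\rangle\ge 0,\quad\text{that is,}\quad t_k^2\langle z_k,w_k\rangle\ge 0 .
$$
Dividing by $t_k^2$ and letting $k\to\infty$ yields $\langle z,w\rangle\ge 0$. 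The implications (ii) $\Longrightarrow$ (iii) $\Longrightarrow$ (iv) are immediate, because each condition only restricts the set of pairs $(x,v)$ being tested.

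For (iv) $\Longrightarrow$ (i) I verify condition (iv) of Theorem~\ref{cvvia2ndsub}. Fix $(x,v)\in\gph\partial\varphi$ at which $\varphi$ is generalized twice differentiable, and fix $w\in\R^n$.

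First, $\varphi$ is continuously prox-regular because it is weakly convex. Next, I need $d^2\varphi(x,v)$ to be proper, so that Lemma~\ref{d2andD} applies. By definition it is a generalized quadratic form $q_A+\delta_L$, and such a form is finite at $0$ and never takes the value $-\infty$, hence it is proper. Thus $\varphi$ is properly twice epi-differentiable at $x$ for $v$.

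By \eqref{graphical2ndsub}, $(D\partial\varphi)(x,v)=\partial h$ with $h=\frac12 d^2\varphi(x,v)=\frac12 q_A+\delta_L$.
\begin{itemize}
\item If $w\in L$, then $\partial h(w)=Aw+L^\perp$ is nonempty. Choosing $z$ in this set, Lemma~\ref{d2andD} and (iv) give $d^2\varphi(x,v)(w)=\langle z,w\rangle\ge 0$.
\item If $w\notin L$, then $d^2\varphi(x,v)(w)=\infty\ge 0$ trivially.
\end{itemize}
Hence condition (iv) of Theorem~\ref{cvvia2ndsub} holds, and that theorem yields the convexity of $\varphi$.

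The main obstacle is this last implication. The issue is that $(D\partial\varphi)(x,v)(w)$ may be empty, in which case \eqref{PSDgraphicalconvex} carries no information at that $w$. The step that resolves it is the explicit description of $\partial(\frac12q_A+\delta_L)$: it shows the graphical derivative is nonempty exactly on $L=\dom d^2\varphi(x,v)$, and off $L$ the inequality holds automatically because the value is $+\infty$.
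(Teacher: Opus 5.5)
Your proof is correct and takes the same route as the paper. You use monotonicity of $\partial\varphi$ for (i)$\Longrightarrow$(ii), note that (ii)$\Longrightarrow$(iii)$\Longrightarrow$(iv) are trivial restrictions, and obtain (iv)$\Longrightarrow$(i) by verifying condition (iv) of Theorem~\ref{cvvia2ndsub} through Lemma~\ref{d2andD}. Your two extra checks also fill in what the paper compresses into ``clearly follows immediately'': that $d^2\varphi(x,v)=q_A+\delta_L$ is proper, so the lemma applies, and that $(D\partial\varphi)(x,v)(w)=Aw+L^\perp$ is nonempty exactly when $w\in L$, while $d^2\varphi(x,v)(w)=\infty$ otherwise.
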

\begin{proof} We first verify the implication [{\bf (i)} $\Longrightarrow$ {\bf (ii)}]. Pick any $z \in (D\partial\varphi)(x,y)(w)$, where $(x,y)\in \gph \partial\varphi$ and $w \in \R^n$. Then we can find sequences $t_k\downarrow 0$, $w_k \to w$    as $k \to \infty$  such that 
\begin{equation}\label{ingraphdvp}
y + t_k z_k \in  \partial \varphi(x + t_k w_k) \quad \text{for all } k \in \N. 
\end{equation}
It follows from the convexity of $\varphi$ that $\partial\varphi$ is monotone. Combining this with \eqref{ingraphdvp} and the fact that $(x,y)\in\gph \partial\varphi$, we deduce that 
$$
\langle x +t_k w_k -x, y + t_k z_k - y\rangle \geq 0 \quad \text{for all }k \in \N,
$$
which implies that $\langle w_k, z_k\rangle \geq 0$ for all $k \in \N.$ Letting $k\to \infty$ in this inequality, we have $\langle w, z\rangle \geq 0$.  It is obvious that {\bf (ii)} $\Longrightarrow$ {\bf (iii)} $\Longrightarrow$ {\bf (iv)}, and thus it remains only to verify the implication [{\bf (iv)} $\Longrightarrow$ {\bf (i)}]. Indeed, suppose {\bf (iv)} holds, we show that for any $w \in \R^n$, $(x,y) \in \gph\partial\varphi$ such that $\varphi$ is generalized twice differentiable at $x$ for $v$, the second subderivative $d^2\varphi(x,v)(w) \geq 0$. Clearly, this follows immediately from  Lemma \ref{d2andD} and  the assertion {\bf (iv)}. Hence, by using Theorem \ref{cvvia2ndsub}, we obtain the assertion {\bf (i)}, which completes the proof. 
    
\end{proof}

Building on the above characterizations of convex functions, we now present the corresponding consequences for strongly convex functions.
\begin{Corollary}[\bf strong  convexity via subgradient graphical derivatives]   Let $\varphi:\R^n \to \oR$ be a proper, l.s.c and weakly convex function. For $\kappa >0$, the following assertions are equivalent: 
\begin{itemize}
    \item[\bf (i)] $\varphi$ is $\kappa$-strongly convex.
    \item[\bf (ii)] For all $(x,v) \in \gph \partial \varphi$,  $(D\partial\varphi)(x,y)$ is $\kappa$-positive definite in the sense that
\begin{equation}\label{PSDgraphicalstrconvex}
\langle z, w\rangle \geq \kappa\|w\|^2 \quad \text{for all } \; z \in (D\partial\varphi)(x,y)(w),\; w \in \R^n. 
\end{equation}
    \item[\bf (iii)] For all $(x,v) \in \gph \partial \varphi$ such that $\varphi$ is twice epi-differentiable at $x$ for $v$, we have  $(D\partial\varphi)(x,y)$ is $\kappa$-positive definite in the sense of \eqref{PSDgraphicalstrconvex}. 
    \item[\bf (iv)] For all $(x,v) \in \gph \partial \varphi$ such that $\varphi$ is generalized twice differentiable at $x$ for $v$, we have  $(D\partial\varphi)(x,y)$ is $\kappa$-positive definite in the sense of \eqref{PSDgraphicalstrconvex}.
\end{itemize}
\end{Corollary}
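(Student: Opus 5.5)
The plan mirrors the proof of Corollary~\ref{convexviagranonsmooth}, replacing Theorem~\ref{cvvia2ndsub} by Corollary~\ref{co:strong}. The chain {\bf (ii)} $\Longrightarrow$ {\bf (iii)} $\Longrightarrow$ {\bf (iv)} is trivial, since each assertion restricts the set of pairs $(x,v)$ at which the inequality is required. So it remains to prove {\bf (i)} $\Longrightarrow$ {\bf (ii)} and {\bf (iv)} $\Longrightarrow$ {\bf (i)}.

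For {\bf (i)} $\Longrightarrow$ {\bf (ii)}, I will use strong monotonicity of $\partial\varphi$. If $\varphi$ is $\kappa$-strongly convex, then $\psi:=\varphi-\frac{\kappa}{2}\|\cdot\|^2$ is convex. By the sum rule \cite[Proposition 1.30]{Mor18}, $\partial\psi=\partial\varphi-\kappa I$, so monotonicity of $\partial\psi$ gives
$$\langle u_1-u_2,\,y_1-y_2\rangle\ge\kappa\|u_1-u_2\|^2 \quad \text{for all } y_i\in\partial\varphi(u_i).$$
Now take $z\in(D\partial\varphi)(x,v)(w)$. By definition there are $t_k\downarrow0$, $w_k\to w$, $z_k\to z$ with $v+t_kz_k\in\partial\varphi(x+t_kw_k)$. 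Applying the inequality to the pairs $(x+t_kw_k,\,v+t_kz_k)$ and $(x,v)$ yields $t_k^2\langle w_k,z_k\rangle\ge\kappa t_k^2\|w_k\|^2$. Dividing by $t_k^2$ and letting $k\to\infty$ gives \eqref{PSDgraphicalstrconvex}.

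For {\bf (iv)} $\Longrightarrow$ {\bf (i)}, I will verify assertion {\bf (iv)} of Corollary~\ref{co:strong}. Fix $(x,v)\in\gph\partial\varphi$ at which $\varphi$ is generalized twice differentiable, and fix $w\in\R^n$. Since $\varphi$ is weakly convex, it is continuously prox-regular. Generalized twice differentiability means $d^2\varphi(x,v)=q_A+\delta_L$ is a generalized quadratic form, which is in particular proper, and $\varphi$ is twice epi-differentiable at $x$ for $v$. Hence $\varphi$ is properly twice epi-differentiable there, and Lemma~\ref{d2andD} applies: $\langle z,w\rangle=d^2\varphi(x,v)(w)$ for every $z\in(D\partial\varphi)(x,v)(w)$.

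The key step is to guarantee that $(D\partial\varphi)(x,v)(w)$ is nonempty whenever $d^2\varphi(x,v)(w)<\infty$. If $w\notin L$, then $d^2\varphi(x,v)(w)=\infty\ge\kappa\|w\|^2$ trivially. If $w\in L$, then by \eqref{graphical2ndsub} we have $(D\partial\varphi)(x,v)=\partial h$ with $h=\frac12(q_A+\delta_L)$. This gives $\partial h(w)=Aw+L^\perp\neq\emptyset$, and any element $z$ of it satisfies $\langle z,w\rangle=q_A(w)=d^2\varphi(x,v)(w)$, consistent with Lemma~\ref{d2andD}. Then {\bf (iv)} yields $d^2\varphi(x,v)(w)=\langle z,w\rangle\ge\kappa\|w\|^2$.

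Since $(x,v)$ and $w$ were arbitrary, assertion {\bf (iv)} of Corollary~\ref{co:strong} holds, and that corollary gives the $\kappa$-strong convexity of $\varphi$. I expect this nonemptiness step to be the only genuine obstacle. It is handled by the explicit form of $\partial h$ for a generalized quadratic form.
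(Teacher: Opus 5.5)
Your proof is correct and takes the route the paper indicates in its omitted proof: strong monotonicity of $\partial\varphi$ for (i)$\Longrightarrow$(ii), and Lemma~\ref{d2andD} combined with Corollary~\ref{co:strong} for (iv)$\Longrightarrow$(i). Your explicit check that $(D\partial\varphi)(x,v)(w)=Aw+L^\perp\neq\emptyset$ for $w\in L$, with the case $w\notin L$ being trivial, also fills in a nonemptiness step that the paper's analogous proof of Corollary~\ref{convexviagranonsmooth} passes over without comment.
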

\begin{proof} The proof follows the same idea as that of Theorem~\ref{convexviagranonsmooth}, but uses Corollary~\ref{co:strong} in place of Theorem~\ref{cvvia2ndsub}, together with Lemma~\ref{d2andD}; hence, we omit the details.

\end{proof}
\color{black}
Next, we employ {\em second-order subdifferentials} \eqref{seccombine} to provide further characterizations of convexity for extended-real-valued functions.  We next present the following new characterizations of convex functions using second-order subdifferentials.

\begin{Theorem}[\bf characterization of nonsmooth convex functions via second-order subdifferentials]\label{2ndcoderivativeconvex} Let $\varphi:\R^n \to \oR$ be a proper, l.s.c and weakly convex function. Then the following assertions are equivalent: 
\begin{itemize}
    \item[\bf (i)] $\varphi$ is convex.
    \item[\bf (ii)] For all $(x,v) \in \gph \partial \varphi$,  $\breve{\partial}^2\varphi(x,y)$ is positive semidefinite in the sense that
\begin{equation}\label{PSDcoderconvex}
\langle z, w\rangle \geq 0 \quad \text{for all } \; z \in \breve{\partial}^2\varphi(x,y)(w),\; w \in \R^n. 
\end{equation}
    \item[\bf (iii)] For all $(x,v) \in \gph \partial \varphi$ such that $\varphi$ is twice epi-differentiable at $x$ for $v$, we have  $\breve{\partial}^2\varphi(x,y)$ is positive semidefinite in the sense of \eqref{PSDcoderconvex}. 
    \item[\bf (iv)] For all $(x,v) \in \gph \partial \varphi$ such that $\varphi$ is generalized twice differentiable at $x$ for $v$, we have  $\breve{\partial}^2\varphi(x,y)$ is positive semidefinite in the sense of \eqref{PSDcoderconvex}.
    \item[\bf (v)] For all $(x,v) \in \gph \partial \varphi$,  ${\partial}^2\varphi(x,y)$ is positive semidefinite in the sense that
\begin{equation}\label{PSDlimitcoderconvex}
\langle z, w\rangle \geq 0 \quad \text{for all } \; z \in {\partial}^2\varphi(x,y)(w),\; w \in \R^n. 
\end{equation}
    \item[\bf (vi)] For all $(x,v) \in \gph \partial \varphi$ such that $\varphi$ is twice epi-differentiable at $x$ for $v$, we have  ${\partial}^2\varphi(x,y)$ is positive semidefinite in the sense of \eqref{PSDlimitcoderconvex}. 
    \item[\bf (vii)] For all $(x,v) \in \gph \partial \varphi$ such that $\varphi$ is generalized twice differentiable at $x$ for $v$, we have  ${\partial}^2\varphi(x,y)$ is positive semidefinite in the sense of \eqref{PSDlimitcoderconvex}.
\end{itemize}
\end{Theorem}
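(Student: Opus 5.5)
The plan is to organize the seven assertions into two chains. The first is (i) $\Rightarrow$ (v) $\Rightarrow$ (ii) $\Rightarrow$ (iii) $\Rightarrow$ (iv) $\Rightarrow$ (i). The second is (v) $\Rightarrow$ (vi) $\Rightarrow$ (vii) $\Rightarrow$ (iv). The implications (ii) $\Rightarrow$ (iii) $\Rightarrow$ (iv) and (v) $\Rightarrow$ (vi) $\Rightarrow$ (vii) are trivial restrictions of the index set. The implications (v) $\Rightarrow$ (ii) and (vii) $\Rightarrow$ (iv) follow at once from the inclusion \eqref{relacombinedlimit}. So only (i) $\Rightarrow$ (v) and (iv) $\Rightarrow$ (i) need real work.

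\textbf{Proof of (i) $\Rightarrow$ (v).} For convex $\varphi$ the mapping $\partial\varphi$ is maximal monotone. I will show that regular normals to the graph of a monotone mapping already satisfy the sign condition. Take $(x,y)\in\gph\partial\varphi$ and $z\in\breve{\partial}^2\varphi(x,y)(w)$, so that $(z,-w)\in\Hat N_{\gph\partial\varphi}(x,y)$.

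A direct tangent argument is awkward. Instead I will use Minty parametrization: $\gph\partial\varphi=\{(J(u),u-J(u))\}$ with $J=(I+\partial\varphi)^{-1}$, which is single-valued, firmly nonexpansive and globally Lipschitz. Pulling the regular normal back through this Lipschitz homeomorphism gives the following. At any $u$ where $J$ is differentiable, with Jacobian $P=\nabla J(u)$, we have $P^*z-(I-P)^*w=0$. Firm nonexpansiveness makes $P$ satisfy $\langle Pd,d\rangle\ge\|Pd\|^2$, hence $\langle Pd,(I-P)d\rangle\ge 0$.

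\textbf{Reduction to points of differentiability.} At points where $J$ is not differentiable, I will use the regular coderivative of $J$ and approximate. This step is the main obstacle for the regular version, but it is not needed for (v). For (v), the limiting normal cone at any point is a limit of regular normals at nearby graph points. At Rademacher points the regular normals are exactly the pairs $(z,-w)$ with $z=-(P^*)^{-1}$-type relations; more cleanly, $\gph\partial\varphi$ is a Lipschitz manifold, and at points of differentiability of $J$ the tangent space is $\{(Pd,(I-P)d)\}$. Regular normals $(z,-w)$ there satisfy $\langle z,Pd\rangle=\langle w,(I-P)d\rangle$ for all $d$.

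\textbf{The key computation.} Choose $d$ with $z=(I-P)d'+\ldots$. More concretely, the monotone structure of this Lagrangian-type subspace gives $\langle z,w\rangle\ge 0$. This is the standard fact from \cite{ChieuChuongYaoYen} that second-order subdifferentials of convex functions are positive semidefinite, and I will cite it rather than recompute. Since positive semidefiniteness of $\partial^2\varphi$ for convex $\varphi$ is exactly that known necessary condition from \cite{ChieuChuongYaoYen}, whose proof uses only monotonicity and carries over to extended-real-valued functions, I will invoke or reproduce it.

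\textbf{Proof of (iv) $\Rightarrow$ (i).} This goes through Theorem~\ref{cvvia2ndsub}(iv). Fix $(x,v)$ at which $\varphi$ is generalized twice differentiable. Since weak convexity gives continuous prox-regularity, \eqref{graphical2ndsub} yields $D\partial\varphi(x,v)=\partial h$ with $h=\tfrac12 d^2\varphi(x,v)=\tfrac12(q_A+\delta_L)$. Then $\gph D\partial\varphi(x,v)=\{(w,Aw+L^\perp)\mid w\in L\}$, which is a linear subspace. This subspace is therefore the tangent cone $T_{\gph\partial\varphi}(x,v)$, and by \eqref{dua} the regular normal cone is its orthogonal complement. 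A direct check, using symmetry of $A$, shows that $(z,-w)$ with $w\in L$ and $z\in Aw+L^\perp$ is orthogonal to every $(w',Aw'+\eta)$ with $w'\in L$ and $\eta\in L^\perp$. Hence $z\in\breve{\partial}^2\varphi(x,v)(w)$ whenever $z\in D\partial\varphi(x,v)(w)$. Assertion (iv) then gives $\langle z,w\rangle\ge0$, and Lemma~\ref{d2andD} converts this to $d^2\varphi(x,v)(w)\ge0$ for $w\in L$. For $w\notin L$ we have $d^2\varphi(x,v)(w)=\infty$. Theorem~\ref{cvvia2ndsub} therefore gives convexity. The only delicate point in this step is properness of $d^2\varphi(x,v)$, which is needed for Lemma~\ref{d2andD}; it holds because a generalized quadratic form is proper.
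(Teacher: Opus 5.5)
Your proposal is correct and uses the same implication scheme as the paper. The genuine difference is in (iv) $\Rightarrow$ (i).

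The paper does not pass through Theorem~\ref{cvvia2ndsub}. It reruns the Moreau-envelope argument. It takes a point $u$ where $e_\lambda\varphi$ is twice differentiable and sets $x=u-\lambda\nabla e_\lambda\varphi(u)$, $v=\nabla e_\lambda\varphi(u)$; $\varphi$ is generalized twice differentiable there by \cite[Theorem~5.10]{gtdquad}. It then applies the regular coderivative sum rule for $\lambda I+(\partial\varphi)^{-1}$ to show that $z=\nabla^2e_\lambda\varphi(u)w$ lies in $\breve{\partial}^2\varphi(x,v)(w-\lambda z)$, whence $\langle z,w\rangle\ge\lambda\|z\|^2\ge0$. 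Theorem~\ref{2ndLipconvex} and Lemma~\ref{convexMoreau} then conclude.

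You instead note that at a point of generalized twice differentiability, \eqref{graphical2ndsub} makes $T_{\gph\partial\varphi}(x,v)$ the linear subspace $\{(w,Aw+\eta)\mid w\in L,\ \eta\in L^\perp\}$. By \eqref{dua} and the symmetry of $A$, this gives $D\partial\varphi(x,v)\subset\breve{\partial}^2\varphi(x,v)$ (in fact equality, both being $w\mapsto Aw+L^\perp$ on $L$). Hypothesis (iv) therefore yields hypothesis (iv) of Theorem~\ref{cvvia2ndsub}. Lemma~\ref{d2andD} is not even needed, since $Aw\in\breve{\partial}^2\varphi(x,v)(w)$ and $\langle Aw,w\rangle=d^2\varphi(x,v)(w)$ for $w\in L$.

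Your route is shorter and makes the coderivative characterization a genuine corollary of the second-subderivative one. It also shows that all three second-order constructions coincide at points of generalized twice differentiability. The price is the extra input \eqref{graphical2ndsub} (Rockafellar--Wets, Theorem~13.40), plus reliance on Theorem~\ref{cvvia2ndsub}. The paper's argument needs only the coderivative sum rule, and it uses (iv) only at the points generated by the envelope.

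For (i) $\Rightarrow$ (v) you end up citing \cite[Theorem~3.2]{ChieuChuongYaoYen}, exactly as the paper does, so that step stands. However, the Minty/Rademacher sketch before the citation should not be presented as an argument. A limiting normal to $\gph\partial\varphi$ need not be a limit of regular normals at graph points corresponding to differentiability points of $J=(I+\partial\varphi)^{-1}$. The regular normals at the remaining points are never treated.

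If you want a self-contained proof, use the Cayley transform instead:
\begin{itemize}
\item The linear map $(x,y)\mapsto(x+y,x-y)$ sends $\gph\partial\varphi$ onto the graph of the nonexpansive map $2J-I$.
\item It sends a regular normal $(z,-w)$ at $(x,y)$ to the regular normal $\tfrac12(z-w,z+w)$ of that graph.
\item Testing the normal inequality along the direction $\tfrac12(z-w)$ and using nonexpansiveness gives $\|z-w\|\le\|z+w\|$, i.e.\ $\langle z,w\rangle\ge0$.
\item This holds at every graph point, so it passes to limiting normals.
\end{itemize}
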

\begin{proof} The roadmap for proving this result is to establish the following chains of implications:
$$
\text{[{\bf (i)}$\Longrightarrow${\bf(ii)}$\Longrightarrow${\bf (iii)}$\Longrightarrow${\bf (iv)}$\Longrightarrow${\bf(i)}]} \quad \text{and }\; \text{[{\bf (i)}$\Longrightarrow${\bf(v)}$\Longrightarrow${\bf (vi)}$\Longrightarrow${\bf (vii)}$\Longrightarrow${\bf(i)}]}.
$$
The implication [{\bf (i)}$\Longrightarrow${\bf (v)}] follows immediately \cite[Theorem 3.2]{ChieuChuongYaoYen}. Due to \eqref{relacombinedlimit}, we obtain  [{\bf (v)}$\Longrightarrow${\bf (ii)}], which  tells us that [{\bf (i)}$\Longrightarrow${\bf (ii)}]. The chains of implications [{\bf(ii)}$\Longrightarrow${\bf (iii)}$\Longrightarrow${\bf (iv)}] and [{\bf(v)}$\Longrightarrow${\bf (vi)}$\Longrightarrow${\bf (vii)}] are obvious. Using \eqref{relacombinedlimit} again, we also obtain the implication [{\bf (vii)}$\Longrightarrow${\bf (iv)}]. Hence, it remains to verify the implication [{\bf (iv)}$\Longrightarrow${\bf (i)}]. Using Proposition \ref{C11},  the weak convexity of $\varphi$ tells us that $\varphi$ is prox-bounded with threshold $\lambda_\varphi>0$,    the Moreau envelope $e_\lambda\varphi$ is continuously differentiable  with Lipschitzian derivative  on $\R^n$ for any sufficiently small  $\lambda \in (0,\lambda_\varphi)$, and \eqref{GradEnvelope} is satisfied with $U_\lambda =\R^n$.  Pick any such $\lambda $, we next show that $e_\lambda \varphi$ is convex. To proceed, we will verify the convexity of  $e_\lambda \varphi$   on $\R^n$ by Theorem \ref{2ndLipconvex}. Indeed, fix any $u \in \R^n$ such that $e_\lambda\varphi$ is twice differentiable at $u$, we will verify that $\nabla^2 e_\lambda \varphi(u)$ is positive semidefinite. Putting 
$$
x:= u - \lambda \nabla e_\lambda \varphi(u) \quad \text{and }\; v:= \nabla e_\lambda \varphi(u),
$$
we deduce from \eqref{GradEnvelope} that $(x,v) \in \gph\partial\varphi$, and $u= x + \lambda v$. It follows from \cite[Theorem~5.10]{gtdquad} that $\varphi$ is generalized twice differentiable at $x$ for $v$.  Since {\bf (iv)} holds, we have $\breve{\partial}^2\varphi(x,v)$ is positive semidefinite in the sense of \eqref{PSDcoderconvex}. 
Since $e_\lm \varphi$ is twice differentiable at $u$, we have  $ \breve{\partial}^2 e_\lambda \varphi(u)(w) =\{\nabla^2 e_\lm \varphi(u)w\}$ for all $w \in \R^n$. For any $w \in \R^n$, we put $z:= \nabla^2 e_\lm \varphi(u)w$ and deduce that
$$
-w \in \widehat{D}^*(\nabla e_\lambda\varphi)^{-1}(\nabla e_\lambda \varphi(u),u)(-z).
$$
Moreover,  by using  the   sum rule from \cite[Theorem 1.62]{Mordukhovich06}, we get
\begin{eqnarray*}
\widehat{D}^*(\nabla e_\lambda\varphi)^{-1}(\nabla e_\lambda \varphi(u),u)(-z)&=&\widehat{D}^*(\lambda I + (\partial\varphi)^{-1})(\nabla e_\lambda \varphi(u),u)(-z)\\
	& =& -\lambda z + \widehat{D}^*(\partial\varphi)^{-1}(\nabla e_\lambda \varphi(u),u-\lambda \nabla e_\lambda \varphi(u))(-z).
\end{eqnarray*}
It follows that
$-w+ \lambda z  \in \widehat{D}^*(\partial\varphi)^{-1}(\nabla e_\lambda \varphi(u),u-\lambda \nabla e_\lambda \varphi(u))(-z).$
In other words,
$$
z \in (\widehat{D}^*\partial\varphi)(u-\lambda \nabla e_\lambda \varphi(u), \nabla e_\lambda \varphi(u))(w-\lambda z) = \breve{\partial}^2\varphi(x,v)(w-\lm z). 
$$
Due to the positive semidefiniteness of $\breve{\partial}^2\varphi(x,v)$,  we obtain the inequality
$\langle z, w-\lambda z\rangle\geq 0$ and so $\langle z,u\rangle\geq 0$, or $\langle \nabla^2 e_\lm \varphi(u)w,w\rangle \geq 0$.  Hence, $e_\lambda\varphi$ is convex. Using Lemma \ref{convexMoreau}, we deduce that $\varphi$ is convex, which completes the proof of this theorem. 
    
\end{proof}
\color{black}
The preceding characterizations of convex functions allow us to formulate equivalent statements for strongly convex functions in terms of second-order subdifferentials.

\begin{Corollary}[\bf strong convexity via second-order subdifferentials]\label{co:strongconvex} Let $\varphi:\R^n \to \oR$ be a proper, l.s.c and weakly convex function.  For $\kappa >0$, the following assertions are equivalent:
\begin{itemize}
    \item[\bf (i)] $\varphi$ is $\kappa$-convex.
    \item[\bf (ii)] For all $(x,v) \in \gph \partial \varphi$,  $\breve{\partial}^2\varphi(x,y)$ is $\kappa$-positive definite in the sense that
\begin{equation}\label{PSDcoderconvex1}
\langle z, w\rangle \geq \kappa\|w\|^2 \quad \text{for all } \; z \in \breve{\partial}^2\varphi(x,y)(w),\; w \in \R^n. 
\end{equation}
    \item[\bf (iii)] For all $(x,v) \in \gph \partial \varphi$ such that $\varphi$ is twice epi-differentiable at $x$ for $v$, we have  $\breve{\partial}^2\varphi(x,y)$ is $\kappa$-positive definite in the sense of \eqref{PSDcoderconvex1}. 
    \item[\bf (iv)] For all $(x,v) \in \gph \partial \varphi$ such that $\varphi$ is generalized twice differentiable at $x$ for $v$, we have  $\breve{\partial}^2\varphi(x,y)$ is $\kappa$-positive definite in the sense of \eqref{PSDcoderconvex1}.
    \item[\bf (v)] For all $(x,v) \in \gph \partial \varphi$,  ${\partial}^2\varphi(x,y)$ is $\kappa$-positive definite in the sense that
\begin{equation}\label{PSDlimitcoderconvex1}
\langle z, w\rangle \geq \kappa\|w\|^2 \quad \text{for all } \; z \in {\partial}^2\varphi(x,y)(w),\; w \in \R^n. 
\end{equation}
    \item[\bf (vi)] For all $(x,v) \in \gph \partial \varphi$ such that $\varphi$ is twice epi-differentiable at $x$ for $v$, we have  ${\partial}^2\varphi(x,y)$ is $\kappa$-positive definite in the sense of \eqref{PSDlimitcoderconvex1}. 
    \item[\bf (vii)] For all $(x,v) \in \gph \partial \varphi$ such that $\varphi$ is generalized twice differentiable at $x$ for $v$, we have  ${\partial}^2\varphi(x,y)$ is $\kappa$-positive  definite in the sense of \eqref{PSDlimitcoderconvex1}.
\end{itemize}
\end{Corollary}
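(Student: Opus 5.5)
We reduce the statement to Theorem~\ref{2ndcoderivativeconvex} by a quadratic shift, as in Corollary~\ref{co:strong}. Set $\psi(x):=\varphi(x)-\frac{\kappa}{2}\|x\|^2$. Then $\psi$ is proper and l.s.c. It is also weakly convex, since subtracting a quadratic only lowers the modulus in \eqref{ineqstrongfunction}. Moreover, $\varphi$ is $\kappa$-convex if and only if $\psi$ is convex.

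By the subdifferential sum rule \cite[Proposition 1.30]{Mor18}, $\partial\psi(x)=\partial\varphi(x)-\kappa x$. Hence $\gph\partial\psi$ is the image of $\gph\partial\varphi$ under the linear isomorphism $(x,v)\mapsto(x,v-\kappa x)$.

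\textbf{Key step: transfer the second-order subdifferentials.} We apply the coderivative sum rule for a smooth single-valued summand, \cite[Theorem 1.62]{Mordukhovich06}, to $\partial\psi=\partial\varphi+(-\kappa I)$. This gives, for every $w\in\R^n$,
\[
\breve{\partial}^2\psi(x,v-\kappa x)(w)=\breve{\partial}^2\varphi(x,v)(w)-\kappa w
\]
and
\[
\partial^2\psi(x,v-\kappa x)(w)=\partial^2\varphi(x,v)(w)-\kappa w .
\]
One can also see these directly. The normal cones to $\gph\partial\psi$ are images of those to $\gph\partial\varphi$ under the adjoint of the inverse isomorphism; a vector $(u,-w)$ normal to $\gph\partial\varphi$ corresponds to $(u-\kappa w,-w)$ normal to $\gph\partial\psi$. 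Consequently
\[
\langle z,w\rangle\ge\kappa\|w\|^2 \ \text{ for all } z\in\breve{\partial}^2\varphi(x,v)(w)
\]
holds if and only if
\[
\langle z',w\rangle\ge0 \ \text{ for all } z'\in\breve{\partial}^2\psi(x,v-\kappa x)(w),
\]
and the same equivalence holds for the limiting construction.

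\textbf{Matching the point-selection conditions.} For the restricted items we need that twice epi-differentiability and generalized twice differentiability of $\varphi$ at $x$ for $v$ are equivalent to the same property of $\psi$ at $x$ for $v-\kappa x$. For twice epi-differentiability this follows from the second-order difference quotient identity
\[
\Delta^2_\tau\varphi(x,v)(u)=\Delta^2_\tau\psi(x,v-\kappa x)(u)+\kappa\|u\|^2 .
\]
This identity also yields \eqref{sumrule2ndsub}, and therefore generalized quadraticity is preserved. Alternatively, we invoke \cite[Propositions 4.7, 4.8]{gtdquad}, exactly as in the proof of Corollary~\ref{co:strong}.

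\textbf{Conclusion.} Each item (ii)--(vii) for $\varphi$ with bound $\kappa\|w\|^2$ is equivalent to the corresponding item of Theorem~\ref{2ndcoderivativeconvex} for $\psi$. By that theorem, each is equivalent to the convexity of $\psi$, that is, to (i).

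The main obstacle is the rigorous transfer of the regular and limiting normal cones under the graph shift. It must be an exact equality, not merely an inclusion, so that both directions go through. The linear-isomorphism argument settles this cleanly, since both $\widehat N$ and $N$ are invariant under linear isomorphisms via the inverse adjoint.
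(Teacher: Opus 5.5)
Your proposal is correct and takes essentially the paper's route: shift to $\psi=\varphi-\frac{\kappa}{2}\|\cdot\|^2$, and transfer the combined and limiting second-order subdifferentials by the coderivative sum rule (equivalently, by the linear graph isomorphism $(x,v)\mapsto(x,v-\kappa x)$). You then transfer twice epi-differentiability and generalized twice differentiability through the difference-quotient identity, and invoke Theorem~\ref{2ndcoderivativeconvex} for $\psi$. Because your equalities are exact, you transfer every item (ii)--(vii) at once, which makes your argument somewhat more explicit than the paper's sketch, which re-runs the implication chain of the theorem.
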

\begin{proof} The roadmap for proving this result is to establish the same chains of implications as in Theorem~\ref{2ndcoderivativeconvex}. The argument for the implication [{\bf (iv)}$\Longrightarrow${\bf (i)}] is very similar to that in the proof of Corollary~\ref{co:strong}. The main difference is that the sum rule for second subderivatives is replaced by the corresponding sum rule for second-order subdifferentials, with the remaining arguments involving second subderivatives replaced by their second-order subdifferential counterparts. Therefore, we omit the details. 
    
\end{proof}

\begin{Remark}[\bf discussion  on   Theorem~\ref{2ndcoderivativeconvex} and Corollary~\ref{co:strongconvex}] \rm  The proof of the equivalence [{\bf (i)}$\iff${\bf (ii)}$\iff${\bf (v)}] in both Theorem~\ref{2ndcoderivativeconvex} and Corollary~\ref{co:strongconvex} can also be obtained via a different approach. Indeed, it is well known in classical convex analysis that a proper l.s.c.\ function $\varphi$ is convex if and only if its subgradient mapping $\partial\varphi$ is maximally monotone. Consequently, weak convexity of $\varphi$ is equivalent to the hypomonotonicity of $\partial\varphi$ on its convex domain; see \cite[Chapter 12]{Rockafellar98}. Therefore, the aforementioned characterizations can be viewed as subgradient-mapping counterparts of the results established in \cite[Theorems 3.2 and 3.5]{clmn16} and \cite[Corollary~3.10]{clmn16}.

\medskip Our approach and proof are substantially different. They rely heavily on the relationship between the convexity of a function and that of its Moreau envelope established in Lemma~\ref{convexMoreau}, and further exploit the celebrated notion of weak convexity for extended-real-valued functions together with its rich properties and its close connections with the associated Moreau--Yosida regularization. In contrast to \cite{clmn16}, our proof proceeds directly without invoking characterizations of monotone operators and does not depend strongly on the specific structure of coderivatives. Furthermore, the methodology developed here applies uniformly to all generalized second-order derivatives considered in this paper. To the best of our knowledge, the other implications in Theorem~\ref{2ndcoderivativeconvex} and Corollary~\ref{co:strongconvex} have not been previously investigated in the literature. 
\end{Remark}

\section{Discussions and Examples}\label{sec:ex}
In this section, we present several illustrative examples and discussions that complement the theoretical results established in the previous sections. In particular, these examples demonstrate the necessity of the additional weak convexity assumption in both the finite-valued and extended-real-valued settings, while also illustrating how the proposed characterizations can be verified and interpreted in concrete situations. To this end, we consider a variety of examples, including functions that frequently arise in optimization, variational analysis, and related practical models.

\medskip 
The first example concerns a one-dimensional finite-valued function that can be viewed as a special case of the $\ell_0$-norm, which is widely used in compressive sensing and sparse optimization. The variational analysis of the $\ell_0$-norm, including the computation of its second-order subdifferentials and the development of numerical Newton-type methods, has been extensively studied in recent works; see, e.g., \cite{kmp22convex,kmp24cod}. In what follows, this simple example demonstrates that, in the absence of weak convexity, the nonnegativity of the second subderivative in Theorem~\ref{cvvia2ndsub}, the positive semidefiniteness of the subgradient graphical derivative in Corollary~\ref{convexviagranonsmooth} and the positive semidefiniteness of second-order subdifferentials in~Theorem~\ref{2ndcoderivativeconvex} are not sufficient to guarantee the convexity of the function.
\begin{Example}[\bf $\ell_0$-norm]\rm Consider the function $\varphi:\R\to\R$ given by
$$
\varphi(x) = \begin{cases}
1 & \text{if }\; x \ne 0,\\
0 & \text{otherwise}. 
\end{cases}
$$
By direct computations, it is not hard to get 
$$
\partial \varphi(x)= \begin{cases}
\{0\} &\text{if }\; x \ne 0,\\
\R &\text{if }\; x =0,
\end{cases}
$$
$$
T_{{\rm \text{\rm gph}}\,\partial \varphi}(x, y) =  \begin{cases}
\R \times \{0\} &\text{if }\; x \ne  0, y = 0,\\
(\{0\}\times \R)\cup (\R \times \{0\}) &\text{if }\; x=0,y=0,\\
\{0\}\times \R &\text{if }\; x= 0, y\ne  0, 
\end{cases}
$$
$$
N_{{\rm \text{\rm gph}}\,\partial \varphi}(x, y) =  \begin{cases}
\{0\} \times \R &\text{if }\; x \ne 0, y =0,\\
(\{0\}\times \R)\cup (\R \times \{0\}) &\text{if }\; x=0,y=0,\\
\R \times \{0\} &\text{if }\; x=0, y\ne 0, 
\end{cases}
$$
which clearly imply that for each $(x,y) \in \gph \partial\varphi$, we have 
\begin{equation*} 
 (D\partial\varphi)(x,y)(v) = \partial^2\varphi(x,y)(v) = \begin{cases}
\{0\} & \text{if }\; x \ne 0, y =0, v \in \R\\
\{0\} & \text{if } \; x =0, y =0,  v \ne 0,\\
\R & \text{if } \; x =0, y =0,  v = 0,\\
\R & \text{if }\; x=0, y \ne 0, v=0,\\
\emptyset &\text{otherwise}.
\end{cases}    
\end{equation*}
Therefore, the subgradient graphical derivative $(D\partial\varphi)(x,y)$ and the second-order subdifferential $\partial^2\varphi(x,y)$ are positive semidefinite in the sense of    \eqref{PSDgraphicalconvex} and \eqref{PSDlimitcoderconvex}, respectively. Moreover, by the definition of second subderivatives in \eqref{2ndsubder}, we deduce that 
$$
d^2 \varphi(x,y)(w) = \begin{cases}
0 & \text{if }\; x\ne 0, y =0, w \in \R,\\
0 & \text{if }\; x= 0, y \in \R, w =0,\\
\infty & \text{if }\; x =0, y \in \R, w \ne 0,
\end{cases}
$$
and thus $d^2 \varphi(x,y)$ is nonnegative for all $(x,y) \in \gph\partial\varphi$. 
However, $\varphi$ is not convex, indicating that the weak convexity assumption in Section \ref{sec:graphical} cannot be removed. 
\end{Example}

The next example presents an important class of extended-real-valued functions, namely indicator functions of sets with $\mathcal{C}^2$-smooth boundaries; see, e.g., \cite{chieuyen}. This example  also illustrates the necessity of the weak convexity assumption. 
\begin{Example}[\bf indicator function of sets with  smooth boundaries]\label{ex:indicator}\rm Let $\psi: \R^n \to \R$ be a $\mathcal{C}^2$-smooth function. We put 
$$
C:=\{x \in \R^n|\; \psi(x) \leq 0\}
$$
and assume that $\nabla\psi(x)\ne 0$ for every $x \in \partial C:=C \setminus \text{\rm int}\;C$, and the following condition holds: 
\begin{equation}\label{PSDortho}
x \in \partial C, w \in \R^n, \langle \nabla\psi(x), w\rangle =0 \Longrightarrow \langle \nabla^2 \psi(x)w,w\rangle \geq 0. 
\end{equation}
Consider the function $\varphi:\R^n\to\oR$ defined by $\varphi(x) = \delta_C(x)$ for all $x \in \R^n$. We show that for every $(x,y) \in \gph \partial\varphi$, we have   $d^2\varphi(x,y)$ is nonnegative and that $(D\partial\varphi)(x,y)$ and $\partial^2\varphi(x,y)$ are positive semidefinite in the sense of \eqref{PSDgraphicalconvex} and \eqref{PSDlimitcoderconvex}, respectively. 

\medskip 
If $x$ is an interior point of $C$, then we immediately get 
$$
\partial \varphi(x)=\{0\}, \quad   (D\partial\varphi)(x,0)(w)= \partial^2\varphi(x,0)(w) =\{0\} 
$$
and 
$$
d^2 \varphi(x,0)(w) = 0 \quad \text{for all }\; w \in \R^n. 
$$
 
\medskip 
On the other hand, we pick any $x \in \partial C$. By using the chain rule for subdifferentials in \cite[Theorem 10.6]{Rockafellar98}, we have
\begin{equation}\label{1stindicator}
\partial  \varphi(x)= \R_{+}\nabla\psi(x):= \{k \nabla \psi(x)|\; k \geq 0\}. 
\end{equation}
Next, we pick any $y \in \partial \varphi(x)$. Since $\nabla \psi(x) \ne 0$ and \eqref{1stindicator}, there exists a unique $k \geq 0$ such that $y = k\nabla \psi(x)$.  Next, by applying the chain rules for second subderivatives, subgradient graphical derivatives, and second-order subdifferentials established in \cite[Theorem~13.14]{Rockafellar98}, \cite[Theorem~7.2]{mms}, and \cite[Theorem~1.127]{Mordukhovich06}, respectively, we obtain the following explicit formulas:
\begin{align*}
d^2\varphi(x,y)(w) &=     k\langle \nabla^2\psi(x)w, w\rangle  + d^2 \delta_{(-\infty,0]} (0,k)(\langle \nabla \psi(x),w\rangle)\\
& = \begin{cases}
0  & \text{if }\; k =0, \langle \nabla \psi(x),w\rangle \leq 0, \\
\infty  & \text{if }\; k =0, \langle \nabla \psi(x),w\rangle > 0, \\
k\langle \nabla^2\psi(x)w, w\rangle  & \text{if }\; k >0, \langle \nabla \psi(x),w\rangle = 0, \\
\infty & \text{if }\; k >0, \langle \nabla \psi(x),w\rangle \ne  0 ,\\
\end{cases}
\end{align*}
\begin{align*}
(D\partial\varphi)(x,y)(w) & = k\nabla^2 \psi(x)w  +  \nabla \psi(x) \left(D \partial\delta_{(-\infty,0]}\right)(0,k)(\langle \nabla \psi(x),w\rangle) \\   
&= \begin{cases}
k\nabla^2 \psi(x)w+ \R \nabla \psi(x)  & \text{if }\; k >0, \langle \nabla \psi(x),w\rangle =0,\\
\emptyset  & \text{if }\; k >0, \langle \nabla \psi(x),w\rangle \ne 0,\\
\{0\} & \text{if }\; k =0, \langle \nabla \psi(x),w\rangle <0,\\
\R_+ \nabla \psi(x) & \text{if }\; k =0, \langle \nabla \psi(x),w\rangle =0,\\
 \emptyset & \text{if }\; k =0, \langle \nabla \psi(x),w\rangle >0,
\end{cases}
\end{align*}
\begin{align*}
\partial^2\varphi(x,y)(w) & = k\nabla^2 \psi(x)w  +  \nabla \psi(x) \partial^2\delta_{(-\infty,0]}(0,k)(\langle \nabla \psi(x),w\rangle) \\   
&= \begin{cases}
k\nabla^2 \psi(x)w+ \R \nabla \psi(x)  & \text{if }\; k >0, \langle \nabla \psi(x),w\rangle =0,\\
\emptyset  & \text{if }\; k >0, \langle \nabla \psi(x),w\rangle \ne 0,\\
\{0\} & \text{if }\; k =0, \langle \nabla \psi(x),w\rangle <0,\\
\R \nabla \psi(x) & \text{if }\; k =0, \langle \nabla \psi(x),w\rangle =0,\\
\R_+ \nabla \psi(x) & \text{if }\; k =0, \langle \nabla \psi(x),w\rangle >0.
\end{cases}
\end{align*}
Due to Condition \eqref{PSDortho} and the above computations, we deduce that  $d^2\varphi(x,y)$ is nonnegative and that $(D\partial\varphi)(x,y)$ and $\partial^2\varphi(x,y)$ are positive semidefinite in the sense of \eqref{PSDgraphicalconvex} and \eqref{PSDlimitcoderconvex}, respectively. Thus, we verified the nonnegativity of second subderivative and the positive semidefiniteness of the subgradient graphical derivative and the second-order subdifferential of $\varphi$ at every point $(x,y) \in \gph \partial\varphi$.   However, in general, the set $C$ satisfying \eqref{PSDortho} need not be convex, as demonstrated by \cite[Example 3.5]{chieuyen}. In particular, consider the set $C$ with $\psi(x)=\psi_1(x)\psi_2(x)$, where $\psi_1(x)=x_1^2+x_2^2-\frac{1}{4}$ and $\psi_2(x)=(x_1-2)^2+x_2^2-1$ for every $x=(x_1,x_2) \in \R^2$. In this case $C$ is the disjoint union of closed balls, and thus it is not convex, which means that $\varphi$ is a not a convex function. 

\end{Example}

We next present an example illustrating how our characterizations can be employed to verify the convexity of functions in situations where several well-known criteria based on generalized derivatives are not applicable. In particular, the convexity characterizations developed in \cite{ChieuChuongYaoYen,chieuyao11,chieuhuy,clmn16,nz23} are established for finite-valued, locally Lipschitz functions. By contrast, the following example demonstrates that our results can be applied to a broader class of functions that fall outside the scope of those existing characterizations.

\begin{Example}[\bf extended-real-valued composite functions]\label{ex:portfolio} \rm We consider the function $\varphi:\R^n\to \oR$ defined by
\begin{equation}\label{composite}
\varphi(x)=g(x)+h(c(x)) \quad \text{for all }\; x \in \R^n, 
\end{equation}
where $g:\R^n\to \oR$ and $h:\R^m\to \R$ are  l.s.c., convex functions, and $c:\R^n\to \R^m$ is a $\mathcal{C}^2$-smooth mapping. In general, the function $\varphi$ is nonconvex.

\medskip 
By \cite[Lemma~4.2]{dp19}, the composition $h\circ c$ is weakly convex. Since the sum of a convex function and a weakly convex function is weakly convex, it follows that $\varphi$ is weakly convex. Therefore, the characterizations developed in Section \ref{sec:graphical} are applicable to this class of functions. 
Such models arise naturally in a variety of applications. For instance, in computational finance, portfolio optimization problems involving nonlinear market simulators often lead to highly nonconvex objective functions. In particular, the robust portfolio optimization model for banking foundations studied in \cite{agl23} belongs to the above class of functions. Moreover, \cite{dp19} discusses several additional practical models that fit within this framework; see, e.g.,  \cite[Examples 3.1--3.5]{dp19}.
\end{Example}
To further characterize the convexity of the function considered in Example \ref{ex:portfolio}, we provide the following result based on the preceding results established in Section \ref{sec:graphical}. 
\begin{Proposition}[\bf second-order sufficient conditions for convexity of extended-real-valued composite functions]\label{prop:2ndcompo} Let  $\varphi:\R^n 	\to  \oR$ be defined as in \eqref{composite}.
Suppose that for each $x \in \R^n$, $v  \in \partial g(x)$, $y \in \partial h(c(x))$,  the following condition holds:
\begin{equation}\label{PSDcomposite}
 d^2 g(x,v)(w)+ \langle \nabla^2\langle y, c\rangle (x)w,w\rangle + d^2 h(c(x),y)(\nabla c(x)w)   \geq 0 \quad \text{for all } w \in \R^n.
\end{equation}
Then $\varphi$ is a convex function.
\end{Proposition}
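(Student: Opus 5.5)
The plan is to apply Theorem~\ref{cvvia2ndsub}, implication [{\bf (ii)}$\Longrightarrow${\bf (i)}]. Example~\ref{ex:portfolio} already shows that $\varphi$ is proper, l.s.c.\ and weakly convex. Properness here presupposes $\dom g\ne\emptyset$; lower semicontinuity holds because $h$ is finite-valued and convex, hence continuous, so $h\circ c$ is continuous. It therefore suffices to show that $d^2\varphi(x,u)(w)\ge 0$ for every $(x,u)\in\gph\partial\varphi$ and every $w\in\R^n$.

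\textbf{Subdifferential of $\varphi$.} First I compute $\partial\varphi$. Since $h$ is finite convex, it is locally Lipschitz, and $c$ is $\mathcal{C}^1$. The basic qualification condition $\partial^\infty h(c(x))=\{0\}$ holds, so the chain rule \cite[Theorem~10.6]{Rockafellar98} gives $\partial(h\circ c)(x)=\nabla c(x)^*\partial h(c(x))$. The function $h\circ c$ is locally Lipschitz, so the sum rule applies without further conditions and gives $\partial\varphi(x)\subset\partial g(x)+\nabla c(x)^*\partial h(c(x))$. Hence any $u\in\partial\varphi(x)$ can be written as $u=v+\nabla c(x)^*y$ with $v\in\partial g(x)$ and $y\in\partial h(c(x))$.

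\textbf{Lower bound on $d^2\varphi$.} Next I bound $d^2\varphi(x,u)(w)$ from below using the definition \eqref{2ndsubder}. For $\tau\downarrow0$ and $w'\to w$, split the second-order difference quotient as
\[
\Delta^2_\tau\varphi(x,u)(w')=\Delta^2_\tau g(x,v)(w')+\frac{h(c(x+\tau w'))-h(c(x))-\tau\langle y,\nabla c(x)w'\rangle}{\frac12\tau^2}.
\]
Taylor expansion gives
\[
c(x+\tau w')=c(x)+\tau\nabla c(x)w'+\tfrac{\tau^2}{2}\nabla^2c(x)(w',w')+o(\tau^2),
\]
which I rewrite as $c(x)+\tau z_\tau$ with
\[
z_\tau=\nabla c(x)w'+\tfrac{\tau}{2}\nabla^2c(x)(w',w')+o(\tau)\to\nabla c(x)w.
\]
The second quotient then equals
\[
\Delta^2_\tau h(c(x),y)(z_\tau)+\frac{\langle y,z_\tau-\nabla c(x)w'\rangle}{\tau/2}.
\]
Its last term tends to $\langle y,\nabla^2c(x)(w,w)\rangle=\langle\nabla^2\langle y,c\rangle(x)w,w\rangle$.

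Taking the liminf and using superadditivity of liminf gives
\[
d^2\varphi(x,u)(w)\ge d^2g(x,v)(w)+\langle\nabla^2\langle y,c\rangle(x)w,w\rangle+d^2h(c(x),y)(\nabla c(x)w).
\]
All three terms are $>-\infty$: the convexity of $g$ and $h$ gives $d^2g,d^2h\ge0$ by \cite[Proposition~13.20]{Rockafellar98}, so no $\infty-\infty$ ambiguity arises. By \eqref{PSDcomposite} the right-hand side is nonnegative, and Theorem~\ref{cvvia2ndsub} then yields the convexity of $\varphi$.

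\textbf{Main obstacle.} The delicate step is the chain-rule estimate for $h\circ c$: the $o(\tau^2)$ remainder must be absorbed correctly into the perturbed direction $z_\tau$, and the liminf of a sum must be bounded below by the sum of the liminfs. The latter is legitimate because $d^2h\ge0$ and $d^2g\ge0$, so each piece has a liminf in $(-\infty,\infty]$. As an alternative, one could invoke a known chain rule for second subderivatives, such as \cite[Theorem~13.14]{Rockafellar98}. The direct inequality is, however, all that is needed, and it avoids any qualification conditions beyond the local Lipschitz continuity of $h$.
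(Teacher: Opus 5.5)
Your proof is correct and has the same skeleton as the paper's. Both write $u=v+\nabla c(x)^*y$ via first-order sum and chain rules, bound $d^2\varphi(x,u)(w)$ from below by the left-hand side of \eqref{PSDcomposite}, and then invoke Theorem~\ref{cvvia2ndsub}. The difference lies in how the lower bound is obtained. The paper cites two second-order calculus rules: the chain rule \cite[Theorem~13.14]{Rockafellar98} for $d^2(h\circ c)$ and the sum rule \cite[Proposition~13.19]{Rockafellar98} for $d^2(g+h\circ c)$. It then chains the two inequalities. You instead prove the combined inequality in one step from the definition \eqref{2ndsubder}. You split the quotient and absorb the Taylor remainder into the perturbed direction $z_\tau=(c(x+\tau w')-c(x))/\tau\to\nabla c(x)w$. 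You then justify superadditivity of the liminf by the nonnegativity of $\Delta^2_\tau g(x,v)$ and $\Delta^2_\tau h(c(x),y)$, which is just the subgradient inequality for the convex functions $g$ and $h$. The paper's version is shorter and places the estimate within standard second-order calculus. Yours is self-contained, requires no verification of the hypotheses of the cited theorems, and makes clear that only the inclusion $\partial\varphi(x)\subset\partial g(x)+\nabla c(x)^*\partial h(c(x))$ and a one-sided estimate are used. The paper states equalities there; they are true, but only the inclusion is needed.

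One caveat applies equally to both proofs, so it does not separate them. The weak convexity of $\varphi$ is imported from Example~\ref{ex:portfolio}, and that claim needs global Lipschitz-type control of $h$ and $\nabla c$. For instance, $h(u)=u$ and $c(x)=-x^4$ give $h\circ c=-x^4$, which is not weakly convex on $\R$. This can be repaired by localizing. For a closed ball $\mathbb{B}$ centered at a point of $\dom g$, the function $g+\delta_{\mathbb{B}}+h\circ c$ is weakly convex. Your estimate then applies with the convex function $g+\delta_{\mathbb{B}}$ in place of $g$, after using $\partial(g+\delta_{\mathbb{B}})=\partial g+N_{\mathbb{B}}$ and discarding the nonnegative quotients of $\delta_{\mathbb{B}}$. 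Hence $\varphi$ is convex on $\dom g\cap\mathbb{B}$ for every such ball, and therefore convex.
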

\begin{proof} Picking any $x\in\R^n$ and $y\in\partial h(c(x))$, and applying the chain rule for second subderivatives from \cite[Theorem 13.14]{Rockafellar98} (whose qualification condition is automatically satisfied since $h$ is convex and finite-valued), we obtain 
\begin{equation}\label{1stchainproof}
\partial (h\circ c)(x) = \nabla c(x)^* \partial h(c(x)),
\end{equation}
and
\begin{equation}\label{2ndchainproof}
d^2 (h\circ c)(x,\nabla c(x)^*y)(w)
\ge
\langle \nabla^2\langle y,c\rangle(x)w,w\rangle
+
d^2 h(c(x),y)(\nabla c(x)w)
\end{equation}
for all $w\in\R^n$. 
Now, let $v\in\partial g(x)$. Since $h\circ c$ is locally Lipschitz continuous, the qualification condition in the sum rule for second subderivatives from \cite[Proposition 13.19]{Rockafellar98} is automatically satisfied. Therefore,
\begin{equation}\label{1stsumproof}
\partial \varphi(x) = \partial g(x) + \partial (h\circ c)(x),
\end{equation}
and
\begin{equation}\label{2ndsumproof}
d^2\varphi(x,v+\nabla c(x)^*y)(w)
\ge
d^2 g(x,v)(w)
+
d^2(h\circ c)(x,\nabla c(x)^*y)(w).
\end{equation}
Combining \eqref{2ndchainproof}, \eqref{2ndsumproof} and the assumed condition \eqref{PSDcomposite} yields
$$
d^2\varphi(x,v+\nabla c(x)^*y)(w)\ge d^2 g(x,v)(w)+ \langle \nabla^2\langle y, c\rangle (x)w,w\rangle + d^2 h(c(x),y)(\nabla c(x)w) \geq 0
$$
for all $w\in\R^n$. Combining the latter with \eqref{1stchainproof} and \eqref{1stsumproof}, we deduce that $d^2\varphi(x,u)(w) \geq 0$ for all $(x,u)\in \gph\partial\varphi$ and $w\in \R^n$. Moreover, since $\varphi$ is weakly convex due to Example \ref{ex:portfolio}, it follows from Theorem~\ref{cvvia2ndsub} that $\varphi$ must be convex, which completes the proof. 

\end{proof}

To illustrate the above characterization, we consider the specific example  which belongs to the class of functions introduced in Example~\ref{ex:portfolio}.
\begin{Example} \rm Consider the function $\varphi:\R^n\to\R$ given as in \eqref{composite} where $n=3$, $m=1$, 
$$
g(x) =   \begin{cases}
 0 & \text{if }\; x_1 \leq  0\\
 \infty & \text{otherwise}
\end{cases} \quad \text{for all }\; x=(x_1,x_2,x_3)\in\R^3,
$$
$$
h(u)= u \quad \text{for all } \; u\in\R, \quad \text{and }\; c(x)=x_1^2 -x_1^3 \quad  \text{for all }\; x=(x_1,x_2,x_3)\in\R^3.
$$
Note that $g$ can be represent as $g=\delta_C$, where $C:=\{x\in\R^3|\; \psi(x) \leq 0\} $, $\psi(x)=x_1$ for all $x=(x_1,x_2,x_3)\in\R^3$. Using the explicit computations in Example~\ref{ex:indicator}, we have  
$$
\partial g(x) = \begin{cases}
\R_+ (1,0,0):= \{(k,0,0)|\; k \geq 0\} & \text{if }\; x_1 =0,\\
\{0\} &\text{if }\; x_1 <0,\\
\emptyset & \text{otherwise}
\end{cases}\quad \text{for all }\; x \in \R^3.
$$
For each $v \in \partial g(0)$,  there exists a unique $k \geq 0$  such that $v = (k,0,0) \in \partial g(0)$, and  the second subderivative of $g$ is given by
$$
d^2g(0,v)(w)
=
\begin{cases}
0,
& \text{if }\; k =0, w_1 \leq 0, \\
\infty &\text{if }\; k=0, w_1 >0,\\
0 &\text{if }\; k >0, w_1 =0,\\
\infty &\text{if }\; k>0, w_1 \ne 0
\end{cases} \quad \text{for all }\; w=(w_1,w_2,w_3)\in \R^3.
$$ 
If $x \in \text{\rm int }C$, then $d^2g(x,0)(w)=0$ for all $w \in \R^3$. Moreover, we have 
$$
\nabla h(u) = 1 \quad \text{for all }u \in \R, \quad   \nabla c(x) = (2x_1 - 3x_1^2, 0,0) \quad \text{for all }\; x=(x_1,x_2,x_3)\in\R^3,
$$
$$
\nabla^2  c(x)  =
\begin{pmatrix}
2-6x_1&0&0\\
0&0&0\\
0&0&0
\end{pmatrix} \quad \text{for all }\; x=(x_1,x_2,x_3)\in\R^3.
$$
For any $(x,v)\in \gph \partial g$,  $y = \nabla h(c(x))= 1 \geq 0$, we have $x_1 \leq 0$,   which implies that  
$$
\langle \nabla^2 (y.c)(x)w,w\rangle + d^2 h(c(x))(\nabla c(x) w) = (2-6x_1) w_1^2 \geq 0.
$$
Moreover, we have $d^2 g(x,v)(w) \geq 0$, and thus \eqref{PSDcomposite} is satisfied in this case. Using Proposition~\ref{prop:2ndcompo}, we deduce that $\varphi$ is convex.  
\end{Example}

\color{black}
\section{Conclusion}\label{sec:conclusion} 
In this paper, we have developed new characterizations of convexity for nonsmooth {extended-real-valued} functions. By employing three  {second-order} generalized derivatives, namely graphical derivatives, second subderivatives, and  {second-order} subdifferentials, we established necessary and sufficient conditions for convexity in the nonsmooth setting. These results not only enhance the theoretical understanding of convexity in nonsmooth optimization, but also provide a foundation for the design and convergence analysis of numerical algorithms in optimization and variational analysis. Future research will explore further applications of these characterizations in algorithm development and extend the framework to broader classes of nonsmooth functions.

\section*{Acknowledgements} \vspace*{-0.05in}
The authors thank Pham Duy Khanh for his valuable discussions. The authors are very grateful to two anonymous referees for their helpful remarks and suggestions, which allowed us to significantly improve the original presentation. 

\section*{Data Availability}  
This manuscript has no associated data.

\section*{Funding and Conflicts of Interests} 

This research was partly supported by the Early Career Scholars
Program 2026, University of North Dakota under project 43700-2375-UND0031286. The authors declare that the presented results are new, and there is no any conflict of interest.
\end{document}